\documentclass[11pt]{article}

\usepackage{graphicx}
\usepackage{amsthm,amsmath,amssymb,tikz,bm}
\usepackage{mathtools}
\usepackage{tikz-network}
\usepackage{xifthen}
\usepackage{comment}
\usepackage{thmtools}
\usepackage[margin=1.0in]{geometry}
\usepackage[shortlabels]{enumitem}
\usepackage{todonotes}
\usepackage[colorlinks=true,
linkcolor=blue,citecolor=blue,
urlcolor=blue]{hyperref}

\usetikzlibrary{calc,shapes,backgrounds,positioning}

\tikzset{
    comp/.style={
        draw,
        rounded corners=2pt,
        minimum width=9mm,
        minimum height=18mm,
        align=center
    },
    smallcomp/.style={
        draw,
        minimum width=10mm,
        minimum height=8mm,
        align=center
    },
    blob/.style={
        draw,
        ellipse,
        minimum width=5.8cm,
        minimum height=2.2cm
    },
    region/.style={
        draw,
        ellipse,
        minimum width=11mm,
        minimum height=22mm,
        align=center
    },
    vtx/.style={
        circle,
        fill,
        inner sep=1.3pt
    }
}

\newtheorem{theorem}{Theorem}[section]
\newtheorem{lemma}[theorem]{Lemma}
\newtheorem{corollary}[theorem]{Corollary}

\newtheorem{claim}{Claim}

\newtheorem{proposition}[theorem]{Proposition}
\newtheorem{question}{Question}



\makeatletter
               {\list{}{\leftmargin=0pt
                        \labelwidth\z@ \itemindent-\leftmargin
                        }}%
               {\endlist}
\makeatother

\title{Perfect Divisibility, Linear Divisibility and Chair-Free Graphs}

\author{
Zhiyu Wang \thanks{Department of Mathematics, Louisiana State University,
Baton Rouge, LA 70803 (\texttt{zhiyuw@lsu.edu}). This author was supported
in part by LA Board of Regents grant LEQSF(2024--27)-RD-A-16.}
\and
Weihao Xia \thanks{Department of Mathematics, Louisiana State University,
Baton Rouge, LA 70803 (\texttt{wxia3@lsu.edu}).}
}

\begin{document}

\maketitle

\begin{abstract}
A graph is \emph{perfectly divisible} if every induced subgraph with at
least one edge admits a partition into a perfect induced subgraph and an
induced subgraph with smaller clique number. Every perfectly divisible
graph $G$ satisfies $\chi(H)\leq\binom{\omega(H)+1}{2}$ for every induced
subgraph $H$ of $G$. We show that the converse fails: for every non-negative
integer $t$, the graph $P(17)\vee K_t$ satisfies this bound for every induced
subgraph but is not perfectly divisible, yielding an infinite family of counterexamples. Motivated by this distinction, we
introduce \emph{$(k,\ell)$-linear divisibility} and prove that every
$(k,\ell)$-linearly divisible graph $G$ satisfies
$\chi(G)\leq k\binom{\omega(G)+1}{2}$. As an application of this framework,
we give a direct structural decomposition showing that every chair-free graph
is $(2,2)$-linearly divisible, where a chair is obtained from $K_{1,3}$ by
subdividing one edge once. The chair-free result was obtained
independently before we became aware of a recent preprint of Liu, Sun, Wang,
Wu, and Zeng~\cite{LSWWZ2026}, who prove the stronger statement that every
chair-free graph is perfectly weight divisible and hence satisfies
$\chi(G)\leq\binom{\omega(G)+1}{2}$.
\end{abstract}

\section{Introduction}\label{sec:intro}

Relating the chromatic number $\chi(G)$ of a graph $G$ to its clique number
$\omega(G)$ is a central topic in structural graph theory. Since
$\chi(G)\geq\omega(G)$, a fundamental problem is to identify graph classes
for which $\chi(G)$ can be bounded above by a function of $\omega(G)$. A
class of graphs $\mathcal G$ is called \emph{hereditary} if every induced
subgraph of every graph in $\mathcal G$ also belongs to $\mathcal G$. One
important and well-studied family of hereditary classes is given by
\emph{$H$-free graphs}, that is, graphs with no induced subgraph isomorphic
to a fixed graph $H$. More generally, given a class of graphs $\mathcal H$,
we say that a graph $G$ is \emph{$\mathcal H$-free} if $G$ is $H$-free for
every $H\in\mathcal H$.
By constructions of Tutte~\cite{De47,De54} and Mycielski~\cite{Mycielski1955}
(see also~\cite{Scott-Seymour2020} for further constructions), there exist
triangle-free graphs with arbitrarily large chromatic number. Hence, for the
class of all graphs, there is no general upper bound on the chromatic number
in terms of the clique number. A graph class $\mathcal G$ is called
\emph{$\chi$-bounded} if there exists a function $f$, called a
\emph{$\chi$-binding function}, such that $\chi(G)\leq f(\omega(G))$ for
every $G\in\mathcal G$. A graph class is called \emph{polynomially
$\chi$-bounded} if it admits a polynomial $\chi$-binding function.
Gy\'arf\'as~\cite{Gyarfas1975} and Sumner~\cite{Sumner1981} conjectured,
in 1975 and 1981, respectively, that for every tree $T$, the class of
$T$-free graphs is $\chi$-bounded. This conjecture is known only for
relatively few classes of forbidden trees~\cite{CSS2019,Gyarfas1975,
GST1980,Gyarfas1987,Kierstead-Penrice1994,Kierstead-Zhu2004,SSS2022,
Scott1997,Scott-Seymour2020}. Polynomial
$\chi$-boundedness is particularly important because, if the class of
$H$-free graphs is polynomially $\chi$-bounded, then $H$ satisfies the
Erd\H{o}s--Hajnal conjecture~\cite{Erdos-Hajnal1977,Erdos-Hajnal1989},
which asserts that there exists $\epsilon_H>0$ such that every $H$-free graph
$G$ has a clique or stable set of size at least $|V(G)|^{\epsilon_H}$.

A classical example of a hereditary $\chi$-bounded class is the class of
\emph{perfect graphs}: a graph $G$ is perfect if every induced subgraph $H$
of $G$ satisfies $\chi(H)=\omega(H)$. Thus, perfect graphs admit the identity
function as a $\chi$-binding function. Chudnovsky, Robertson, Seymour, and
Thomas~\cite{CRST2006} characterized perfect graphs as the graphs with no odd
hole and no odd antihole. This is the Strong Perfect Graph Theorem.
A graph $G$ is called \emph{perfectly divisible} if, for every induced
subgraph $H$ of $G$ with at least one edge, the vertex set $V(H)$ can be
partitioned into sets $A$ and $B$ such that $H[A]$ is perfect and
$\omega(H[B])<\omega(H)$. 
Perfect divisibility was introduced by Ho\`ang~\cite{Hoang2018} in his study
of $(\mathrm{banner},\mathrm{odd\ hole})$-free graphs, and it generalizes
perfect graphs. Indeed, every perfect graph is perfectly divisible, by taking
$A=V(H)$ and $B=\emptyset$ for every induced subgraph $H$ with at least one
edge. Moreover, a
standard induction on $\omega(G)$ shows~\cite{CS2018} that every perfectly
divisible graph $G$ satisfies
$\chi(G)\leq\binom{\omega(G)+1}{2}$. Thus, perfect divisibility yields a
quadratic $\chi$-binding function. Existing proofs that certain graphs are not perfectly divisible have often
proceeded by showing that this chromatic bound fails; see, for
example,~\cite{CLZ2026, LSWWZ2026}. It is therefore natural to ask
whether the converse holds. This question was suggested to us by
V.~Sivaraman~\cite{Sivaraman2026PC}.

\begin{question}\label{question:perfect-divisible}
Is a graph $G$ perfectly divisible if and only if
$\chi(H)\leq\binom{\omega(H)+1}{2}$ for every induced subgraph $H$ of $G$?
\end{question}

We first answer Question~\ref{question:perfect-divisible} in the negative.
Let $q$ be a prime power with $q\equiv1\pmod 4$, and let $\mathbb F_q$ be
the finite field of order $q$. The \emph{Paley graph} $P(q)$ has vertex set
$\mathbb F_q$, where two distinct vertices $x,y\in\mathbb F_q$ are adjacent
if and only if $x-y$ is a nonzero square in $\mathbb F_q$. Equivalently,
\[
E(P(q))
=
\bigl\{\{x,y\}:x,y\in\mathbb F_q,\ x\neq y,\
 x-y\in(\mathbb F_q^\times)^2\bigr\},
\]
where $(\mathbb F_q^\times)^2=\{a^2:a\in\mathbb F_q^\times\}$.

\begin{restatable}{theorem}{paleySeventeen}\label{p17}
The Paley graph $P(17)$ satisfies
$\chi(H)\leq\binom{\omega(H)+1}{2}$ for every induced subgraph $H$ of
$P(17)$, but $P(17)$ is not perfectly divisible. Consequently, every graph
containing $P(17)$ as an induced subgraph is not perfectly divisible.
\end{restatable}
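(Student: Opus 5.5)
The plan is to use three facts about $P(17)$: it is vertex-transitive, it is self-complementary via $x\mapsto \nu x$ for a non-square $\nu$, and it is the extremal graph for $R(4,4)=18$, so $\omega(P(17))=\alpha(P(17))=3$. I will also use that every induced subgraph $H$ of $P(17)$ satisfies $\alpha(H)\le 3$. The final sentence of the theorem is immediate once the first two parts are proved. Perfect divisibility is defined through all induced subgraphs, so it is inherited by induced subgraphs, and a graph containing $P(17)$ induced cannot be perfectly divisible.

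\emph{The chromatic bound.} Let $H$ be an induced subgraph of $P(17)$.
\begin{itemize}
\item If $\omega(H)=1$, the bound holds trivially.
\item If $\omega(H)=3$, we need $\chi(H)\le 6$. It suffices to show $\chi(P(17))\le 6$ by exhibiting an explicit partition of $\mathbb F_{17}$ into six independent sets, for instance five triples and a pair. One can search for one translate-friendly independent triple and use additive shifts. Since $\alpha=3$ also gives $\chi\ge 6$, this is tight.
\item If $\omega(H)=2$, we need $\chi(H)\le 3$. Here $H$ is triangle-free with $\alpha(H)\le 3$, so $|V(H)|\le R(3,4)-1=8$. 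Every triangle-free graph with chromatic number $4$ has at least $11$ vertices (the Grötzsch graph is the smallest). Hence $\chi(H)\le 3$.
\end{itemize}

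\emph{Not perfectly divisible.} Suppose $V(P(17))=A\cup B$ with $P(17)[A]$ perfect and $\omega(P(17)[B])\le 2$.
\begin{itemize}
\item As above, $B$ is triangle-free with independence number at most $3$, so $|B|\le 8$.
\item $P(17)[A]$ is perfect, so $|A|\le \alpha(A)\,\omega(A)\le 9$ by Lovász's inequality $|V|\le\alpha\omega$ for perfect graphs.
\item Hence $|A|=9$ and $|B|=8$ exactly. Then $P(17)[B]$ is one of the (few, three) $(3,4)$-Ramsey graphs on $8$ vertices, and $P(17)[A]$ is a perfect graph on $9$ vertices with $\alpha=\omega=3$.
\end{itemize}

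The main obstacle is ruling out this tight configuration. I would first enumerate all $8$-subsets $B\subseteq\mathbb F_{17}$ inducing triangle-free graphs with $\alpha\le 3$, up to the automorphisms $x\mapsto ax+b$ with $a$ a nonzero square; there are $136$ of these. For each such $B$, I would exhibit an induced $C_5$ (or another odd hole, or an odd antihole) inside the complementary $9$-set $A$, contradicting perfection.

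To make this checkable by hand, I would try to prove structurally that any $9$-vertex induced subgraph of $P(17)$ with $\alpha=\omega=3$ contains an induced $C_5$. One route uses the extra equality condition in perfect graphs: every maximum clique meets every $3$-colouring class. This forces $A$ to split into three disjoint triangles and also into three disjoint independent triples, with strong regularity. That regularity can then be contradicted using the known neighbourhood structure of $P(17)$, where each vertex neighbourhood induces an $8$-vertex graph.

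If this does not close cleanly, I would fall back on an explicit finite case analysis organised by symmetry, which is a routine but necessary computer-style verification.
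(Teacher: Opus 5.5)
Your proposal is correct, and for the non-divisibility part it is the paper's argument. You get $|B|\le 8$ from $R(3,4)=9$, and $|A|\le\alpha(G[A])\,\omega(G[A])\le 9$ because $G[A]$ is perfect; hence $|A|=9$ and $|B|=8$, and what remains is a finite check of the triangle-free $8$-sets up to the affine automorphisms. What you leave undone is that check itself, which the paper carries out by computer. There are $51$ triangle-free $8$-subsets; $136$ is the order of the group $x\mapsto ax+b$ ($a$ a nonzero square), not the number of sets. They form two orbits, with representatives $B_1=\{0,1,3,6,7,10,12,13\}$ and $B_2=\{0,1,3,4,6,10,11,15\}$. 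Their complements contain the induced $5$-cycles $2,4,5,14,15$ and $5,7,8,12,14$. So your tentative structural route (three triangles and three stable triples forming a grid, plus a ``strong regularity'' step that is not justified) is not needed. The real difference is in the chromatic bound. The paper takes $\omega=\alpha=3$ and $\chi(P(17))=6$ from the program. For $\omega(H)=2$ it notes that neighbourhoods in a triangle-free $H$ are stable, so $\Delta(H)\le\alpha(H)\le 3$, and then applies Brooks' theorem. You instead propose an explicit $6$-colouring, which does exist: e.g.\ $\{0,3,6\},\{1,4,7\},\{2,5,8\},\{9,12,15\},\{10,13,16\},\{11,14\}$, using that $3$ and $6$ are non-residues mod $17$. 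You then combine $|V(H)|\le 8$ with Chv\'atal's theorem that the Gr\"otzsch graph is the smallest triangle-free $4$-chromatic graph. Your route makes the chromatic-bound half computer-free (given the classical fact $\omega(P(17))=3$ from $R(4,4)=18$), but it cites a less elementary result than Brooks' theorem. The paper's argument for $\omega(H)=2$ is more self-contained.
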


Let $\mathcal{G}_{\mathrm{quad}}$ denote the hereditary class of graphs $G$
such that $\chi(H)\leq \binom{\omega(H)+1}{2}$
for every induced subgraph $H$ of $G$. By Theorem~\ref{p17},
$P(17)\in\mathcal{G}_{\mathrm{quad}}$, but $P(17)$ is not perfectly
divisible. Therefore, every graph in $\mathcal{G}_{\mathrm{quad}}$ that
contains $P(17)$ as an induced subgraph is a counterexample to
Question~\ref{question:perfect-divisible}.
For two vertex-disjoint graphs $G$ and $F$, let $G\vee F$ denote their
\emph{join}, obtained from their disjoint union by adding all edges between
$V(G)$ and $V(F)$. In particular, for every non-negative integer $t$, the graph
$P(17)\vee K_t$ belongs to $\mathcal{G}_{\mathrm{quad}}$ (where $K_0$ is considered as empty graph). 
Since $P(17)\vee K_t$ contains $P(17)$ as an induced subgraph, it is not
perfectly divisible. Thus,
$\bigl\{P(17)\vee K_t:t\in\mathbb{Z}_{\geq 0}\bigr\}$ is an infinite family
of connected counterexamples to Question~\ref{question:perfect-divisible}.

\medskip
\noindent\textbf{Remark 1.}
Let $\mathcal F=\{P(17)\vee K_t:t\in\mathbb Z_{\geq 0}\}$.
Since $\alpha(P(17))=3$ and
$\alpha(G\vee F)=\max\{\alpha(G),\alpha(F)\}$, every graph in
$\mathcal F$ has independence number $3$ and, by
Theorem~\ref{p17}, is not perfectly divisible. Thus, $\mathcal F$ is another
explicit infinite family of counterexamples to a conjecture of
Ho\`ang~\cite{Hoang2026}, which asserts that every graph $G$ with
$\alpha(G)\leq3$ is perfectly divisible; see also~\cite{CLZ2026} for an earlier counterexample.
For positive integers $a,b,c$, let $S_{a,b,c}$ be the tree formed by three
paths of lengths $a,b,c$ that have one common end and are otherwise
vertex-disjoint. Since each of $K_{1,4}$, $S_{1,1,3}$, and $S_{2,2,2}$
has independence number $4$, every graph in $\mathcal F$ is also
$K_{1,4}$-free, $S_{1,1,3}$-free, and $S_{2,2,2}$-free. Thus, the same
family gives another explicit infinite family satisfying the
three conclusions of
Liu, Sun, Wang, Wu, and Zeng~\cite[Proposition~8.3]{LSWWZ2026}.
Moreover, the hereditary closure $\mathcal C$ of $\mathcal F$ admits the
linear $\chi$-binding function $f(\omega)=\omega+3$. Indeed, every graph in $\mathcal C$ has the form $H\vee K_s$,
where $H$ is an induced subgraph of $P(17)$, and
$\chi(H\vee K_s)-\omega(H\vee K_s)
=\chi(H)-\omega(H)\leq3$. 
\medskip

Theorem~\ref{p17} shows that perfect divisibility is strictly stronger
than the hereditary quadratic chromatic bound defining
$\mathcal{G}_{\mathrm{quad}}$. Motivated by this distinction, we introduce
\emph{$(k,\ell)$-linear divisions}. Unlike perfect divisibility, a
$(k,\ell)$-linear division does not require any part of the graph to induce a
perfect graph. Instead, it uses pairwise-complete induced subgraphs of
smaller clique number together with a bound on the chromatic number of the
remaining induced subgraph.

For a nonnegative integer $r$, let $[r]=\{1,\ldots,r\}$, where
$[0]=\emptyset$. Let $k$ and $\ell$ be positive integers, and let $G'$ be a
nonempty graph with $\omega(G')=\omega$. A partition
$V(G')=V(G_1)\sqcup V(G_2)$, where $G_1$ and $G_2$ are induced subgraphs
of $G'$, is called a \emph{$(k,\ell)$-linear division} of $G'$ if there exist
an integer $r\geq0$ and $\alpha\in\{0,\ldots,\min\{\ell,\omega\}\}$ such
that:
\begin{enumerate}[(1)]
\item
$V(G_1)=V(D_1)\sqcup\cdots\sqcup V(D_r)$, where
$D_1,\ldots,D_r$ are nonempty induced subgraphs of $G'$ that are pairwise
complete to one another, and $\omega(G_1)\leq\omega-\alpha$. Moreover, if
$r=1$, then $\alpha\geq1$; if $r\geq2$, then $\omega(D_i)\geq2$ for every
$i\in[r]$; while if $r=0$, then $G_1$ is empty and
$\alpha=\min\{\ell,\omega\}$.

\item
\[
\chi(G_2)\leq
\begin{cases}
 k\alpha\left(\omega-\dfrac{\alpha-1}{2}\right),
 & r\in\{0,1\},\\[6pt]
 k\left((\alpha+2r-2)
 \left(\omega-\dfrac{\alpha+2r}{2}\right)
 +\dfrac{3\alpha}{2}\right),
 & r\geq2.
\end{cases}
\]
\end{enumerate}
A graph $G$ is called \emph{$(k,\ell)$-linearly divisible} if every connected
induced subgraph of $G$ admits a $(k,\ell)$-linear division. A graph class
$\mathcal{G}$ is called \emph{uniformly $k$-linearly divisible} if there
exists a positive integer $\ell$ such that every graph in $\mathcal{G}$ is
$(k,\ell)$-linearly divisible. We use connected induced subgraphs so that
disjoint unions of $(k,\ell)$-linearly divisible graphs are again
$(k,\ell)$-linearly divisible. If $\ell'\geq \ell$, then every
$(k,\ell)$-linearly divisible graph is $(k,\ell')$-linearly divisible, since the
direct bound for $r=0$ is increasing in $\alpha$.

\medskip
\noindent\textbf{Remark 2.}
The parameter $\ell$ bounds $\alpha$ and therefore restricts the divisions
that may be used. In particular, if $r=0$ and $\omega\geq\ell$, then
condition~(2) becomes
$\chi(G')\leq k\ell\left(\omega-\frac{\ell-1}{2}\right)$, which is linear
in $\omega$ for fixed $\ell$. Thus, the hereditary quadratic chromatic
bound alone does not imply $(k,\ell)$-linear divisibility via $r=0$.
The conclusion of Theorem~\ref{thm:kld} does not depend on $\ell$ because,
for every admissible choice of $\alpha$ and $r$, the bounds in
condition~(2) are chosen so that the inductive bound on $G_1$ and the
direct bound on $G_2$ add to at most
$k\binom{\omega+1}{2}$. Consequently, smaller values of $\ell$ define a
more restrictive property, while every value of $\ell$ yields the same
quadratic chromatic bound.
\medskip

Every perfectly divisible graph is $(1,1)$-linearly divisible. Indeed, let
$G'$ be a connected induced subgraph of a perfectly divisible graph, and
write $\omega=\omega(G')$. If $\omega=1$, then $r=0$ and $\alpha=1$ give
the required division. Assume that $\omega\geq2$, and let
$V(G')=A\sqcup B$ be a perfect division. If $B=\emptyset$, take $r=0$ and
$\alpha=1$; then $G'=G'[A]$ is perfect and $\chi(G')=\omega$. Otherwise,
take $G_1=D_1=G'[B]$, $G_2=G'[A]$, $r=1$, and $\alpha=1$. Then
$\omega(G_1)\leq\omega-1$, while $G_2$ is perfect and
$\chi(G_2)\leq\omega$. Thus, perfect divisibility implies
$(1,1)$-linear divisibility. The converse does not hold by
Proposition~\ref{prop:p17-linear}.

We show by standard induction that every $(k,\ell)$-linearly divisible graph satisfies the following quadratic chromatic upper bound.
\begin{restatable}{theorem}{linearlydivisiblebound}\label{thm:kld}
Let $k$ and $\ell$ be positive integers, and let $G$ be a
$(k,\ell)$-linearly divisible graph with $\omega(G)=\omega$. Then
$\chi(G)\leq k\binom{\omega+1}{2}$.
\end{restatable}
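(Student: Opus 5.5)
We prove the statement by induction on $\omega$, using a single $(k,\ell)$-linear division at each step. Since $\chi(G)$ is the maximum of $\chi$ over the components of $G$, and each component has clique number at most $\omega$, it suffices to treat connected $G$. For $\omega\le 1$, $G$ is edgeless and $\chi(G)\le 1\le k$, so the statement holds.

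Now let $G$ be connected with $\omega(G)=\omega\ge 2$, and take a $(k,\ell)$-linear division $V(G)=V(G_1)\sqcup V(G_2)$ with parameters $r$ and $\alpha$. We use $\chi(G)\le\chi(G_1)+\chi(G_2)$. Condition~(2) bounds $\chi(G_2)$ directly, so the work is bounding $\chi(G_1)$.

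\textbf{Bounding $\chi(G_1)$.} Since $D_1,\dots,D_r$ are pairwise complete,
\[
\chi(G_1)=\sum_{i}\chi(D_i)\quad\text{and}\quad \sum_i w_i=\omega(G_1)\le \omega-\alpha=:s,\qquad\text{where } w_i=\omega(D_i).
\]
Each $D_i$ is an induced subgraph of $G$, hence $(k,\ell)$-linearly divisible. Each also has $w_i<\omega$:
\begin{itemize}
\item if $r=1$, this follows from $\alpha\ge1$;
\item if $r\ge2$, every other part contributes at least $2$ to the sum, so $w_i\le s-2(r-1)<\omega$.
\end{itemize}
The induction hypothesis therefore gives $\chi(D_i)\le k\binom{w_i+1}{2}$.

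\textbf{Case $r=0$.} Then $G_1$ is empty, $\alpha=\min\{\ell,\omega\}\le\omega$, and
\[
\chi(G)=\chi(G_2)\le k\alpha\Bigl(\omega-\tfrac{\alpha-1}{2}\Bigr)=k\Bigl(\tbinom{\omega+1}{2}-\tbinom{\omega-\alpha+1}{2}\Bigr)\le k\tbinom{\omega+1}{2}.
\]

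\textbf{Case $r=1$.} Here $\chi(G_1)\le k\binom{\omega-\alpha+1}{2}$. The identity
\[
\tbinom{\omega+1}{2}-\tbinom{\omega-\alpha+1}{2}=\alpha\Bigl(\omega-\tfrac{\alpha-1}{2}\Bigr)
\]
shows that this plus the bound from~(2) equals exactly $k\binom{\omega+1}{2}$.

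\textbf{Case $r\ge2$.} We need an upper bound on $\sum_i\binom{w_i+1}{2}$ subject to $w_i\ge2$ and $\sum_i w_i\le s$; this optimization is the step requiring care. The function $\binom{x+1}{2}$ is increasing, so we may assume $\sum_i w_i=s$. It is also convex, so the sum is maximized at an extreme point: $r-1$ parts equal to $2$ and one part equal to $s-2r+2$. This gives
\[
\chi(G_1)\le k\Bigl(\tbinom{\omega-\alpha-2r+3}{2}+3(r-1)\Bigr).
\]
Next, $\binom{\omega+1}{2}-\binom{\omega-\alpha-2r+3}{2}$ is the sum of the $\alpha+2r-2$ consecutive integers from $\omega-\alpha-2r+3$ to $\omega$. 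It therefore equals
\[
(\alpha+2r-2)\,\frac{2\omega-\alpha-2r+3}{2}=(\alpha+2r-2)\Bigl(\omega-\tfrac{\alpha+2r}{2}\Bigr)+\tfrac{3}{2}(\alpha+2r-2).
\]
Subtracting $3(r-1)$ leaves exactly $(\alpha+2r-2)\bigl(\omega-\frac{\alpha+2r}{2}\bigr)+\frac{3\alpha}{2}$, which is the bound on $\chi(G_2)$ in condition~(2). Hence $\chi(G_1)+\chi(G_2)\le k\binom{\omega+1}{2}$, completing the induction.
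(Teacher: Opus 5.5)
Your proof is correct and follows the paper's argument step for step. Like the paper, you induct on $\omega$, reduce to connected graphs, and split into the cases $r=0$, $r=1$, $r\ge 2$; in the last case you use convexity to set $r-1$ of the $\omega(D_i)$ equal to $2$, and the same identity then shows the two bounds sum to exactly $k\binom{\omega+1}{2}$. Your explicit checks that each $\omega(D_i)<\omega$, and that the extremal configuration is justified (monotonicity forces $\sum_i\omega(D_i)=\omega-\alpha$, then convexity applies on the simplex), spell out points the paper states briefly.
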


Despite extensive effort, even the asymptotics of the optimal $\chi$-binding
functions for $T$-free graphs are known for very few trees $T$. It is known
that $K_{1,t}$-free graphs and $P_4$-free graphs are polynomially
$\chi$-bounded. Indeed, if $G$ is $K_{1,t}$-free, then
$\chi(G)\leq\Delta(G)+1\leq R(\omega(G),t)$, while every $P_4$-free graph is
perfect. A natural common superclass of these two classes is the class of
\emph{$t$-broom-free graphs}, where a $t$-broom is obtained from
$K_{1,t+1}$ by subdividing one edge once. A $2$-broom is also called a
\emph{chair}. Schiermeyer and Randerath~\cite{Schiermeyer-Randerath2019}
asked whether chair-free graphs admit a polynomial $\chi$-binding function.

\begin{figure}[htb]
\hbox to \hsize{
    \hfil
    \resizebox{2cm}{!}{\begin{tikzpicture}[scale=1, Wvertex/.style={circle, draw=black, fill=white, scale=2}, bvertex/.style={circle, draw=black, fill=black, scale=0.2},rvertex/.style={circle, draw=red, fill=red, scale=0.2}]

\node [bvertex, label={[font=\small] right:$v_2$}] (v2) at (1,-1) {};
\node [bvertex, label={[font=\small] right:$v_1$}] (v1) at (1,0) {};
\node [bvertex, label={[font=\small] left:$u_0$}] (u0) at (0,0)  {};
\node [bvertex, label={[font=\small] left:$u_1$}] (u1) at (0,-1) {};
\node [bvertex, label={[font=\small] left:$u_2$}] (u2) at (0,1) {};

\draw (v2) -- (v1);
\draw (v1) -- (u0);
\draw (u0) -- (u1);
\draw (u0) -- (u2);

\end{tikzpicture}	}%
    \hfil
    \resizebox{3.5cm}{!}{\begin{tikzpicture}[scale=1, Wvertex/.style={circle, draw=black, fill=white, scale=2}, bvertex/.style={circle, draw=black, fill=black, scale=0.2},rvertex/.style={circle, draw=red, fill=red, scale=0.2}]

\node [bvertex, label={[font=\small] above:$v_2$}] (v2) at (-2,0) {};
\node [bvertex, label={[font=\small] above:$v_1$}] (v1) at (-1,0) {};
\node [bvertex, label={[font=\small] above:$u_0$}] (u0) at (0,0)  {};
\node [bvertex, label={[font=\small] right:$u_1$}] (u1) at (1,1) {};
\node [bvertex, label={[font=\small] right:$u_2$}] (u2) at (1,0.5) {};
\node [font=\scriptsize] at (1,0) {$\rotatebox{90}{$\cdots$}$};

\node [bvertex, label={[font=\small] right:$u_{t-1}$}] (u3) at (1,-0.5) {};
\node [bvertex, label={[font=\small] right:$u_{t}$}] (u4) at (1,-1) {};

\draw (v2) -- (v1);
\draw (v1) -- (u0);
\draw (u0) -- (u1);
\draw (u0) -- (u2);
\draw (u0) -- (u3);
\draw (u0) -- (u4);
\end{tikzpicture}	}%
    \hfil
}
    \caption{The chair and the $t$-broom.}
    \label{fig:broom-graph}
\end{figure}

Recently, Liu, Schroeder, Wang, and Yu~\cite{LSWY2023} established a polynomial
$\chi$-binding function $f(\omega)=o(\omega^{t+1})$ for $t$-broom-free
graphs with clique number $\omega$. For $t=2$, they further gave the bound
$f(\omega)=7\omega^2$ for chair-free graphs. For additional coloring bounds on subclasses of chair-free graphs, see e.g.,~\cite{ChudnovskyCookSeymour2020, ChudnovskyHuangKarthickKaufmann2021}. Sivaraman~\cite{KKS2022} further
conjectured that every chair-free graph is perfectly divisible. 
Chudnovsky and Sivaraman~\cite{CS2018} introduced the weighted analogue of
perfect divisibility. For a positive integral weight function
$h:V(G)\to\mathbb Z_{>0}$ and an induced subgraph $H$ of $G$, let
$\omega_h(H)$ denote the maximum total $h$-weight of a clique in $H$. A
graph $G$ is \emph{perfectly weight divisible} if, for every such function
$h$ and every induced subgraph $H$ of $G$ with at least one edge, there is a
partition $V(H)=A\sqcup B$ such that $H[A]$ is perfect and
$\omega_h(H[B])<\omega_h(H)$. Taking $h\equiv1$ shows that perfect weight
divisibility implies perfect divisibility. 
Perfect divisibility
was established for several subclasses of chair-free graphs. Karthick,
Kaufmann, and Sivaraman~\cite{KKS2022} proved that every
$(\mathrm{chair},F)$-free graph is perfectly divisible for
$F\in\{P_6,\mathrm{co\mbox{-}dart},\mathrm{bull}\}$, and Wu and
Xu~\cite{WuXu2024} proved the same for $F=\mathrm{odd\ balloon}$.
Xu and Zhuang~\cite{XZ2026} showed that perfect weight divisibility of
chair-free graphs is equivalent to that of claw-free graphs and proved that
$(\mathrm{chair},F)$-free graphs are perfectly weight divisible for
$F\in\{P_7,P_6\cup K_1\}$. Chen and Zhang~\cite{ChenZhang2026} proved
perfect divisibility for
$F\in\{\mathrm{odd\ balloon}^{+},P_3\cup P_4\}$. Lan, Liu, Wu, and
Zhou~\cite{LanLiuWuZhou2026} showed that every
$(\mathrm{chair},\mathrm{odd\ parachute})$-free graph is either perfectly
divisible or has a trisimplicial vertex, and consequently satisfies the
quadratic chromatic bound implied by perfect divisibility.

Below, improving the $7\omega^2$ upper bound established by Liu, Schroeder, Wang and Yu~\cite{LSWY2023}, we show that the class of chair-free graphs is uniformly $2$-linearly divisible.
\begin{restatable}{theorem}{twolinearlydivisible}\label{main thm}
Every chair-free graph is $(2,2)$-linearly divisible. Consequently, if $G$
is a chair-free graph with clique number $\omega$, then
$\chi(G)\leq2\binom{\omega+1}{2}$.
\end{restatable}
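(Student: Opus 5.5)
The second sentence follows from the first by Theorem~\ref{thm:kld} with $k=\ell=2$. So the whole task is to show that every connected chair-free graph $G'$ with $\omega=\omega(G')$ has a $(2,2)$-linear division. Chair-freeness is hereditary, so it suffices to treat $G'$ itself. If $\omega\le 2$, take $r=0$ and $\alpha=\min\{2,\omega\}$. The required bound is then $2\alpha(\omega-\frac{\alpha-1}{2})$, which equals $2$ for $\omega=1$ and $6$ for $\omega=2$. Triangle-free chair-free connected graphs are easy to colour with few colours (for instance via the bipartite/$C_5$-blowup structure), so this case is fine. From now on assume $\omega\ge 3$.

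\textbf{Main construction.} Fix a vertex $v$ and take its BFS layers $L_0=\{v\},L_1=N(v),L_2,\dots$. The standard chair-free argument (as in Liu--Schroeder--Wang--Yu) analyses how a vertex $x\in L_2$ sees $L_1$.

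\begin{itemize}
\item If $x\in L_2$ has a neighbour $u\in L_1$, then any two nonadjacent vertices of $N(u)\cap L_1$ not adjacent to $x$ would, together with $v,u,x$, create a chair. This forces strong structure on $N(v)$ relative to the far layers.
\item Rather than colouring layer by layer, I would pick $v$ in a maximum clique $K$, or more cleverly choose a suitable vertex.
\item I would then split $N(v)$ by the components of its complement. Let $D_1,\dots,D_r$ be the co-components of $G'[N(v)]$ that are not single vertices. These are pairwise complete, and each has $\omega(D_i)\ge 2$ unless it is a single vertex.
\item Put the universal singleton co-components together with $v$ into a clique, which lowers the available clique number.
\end{itemize}

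\textbf{Choice of $G_1$ and $G_2$.} Set $G_1=\bigsqcup D_i$, or a suitable subset of $N(v)$, and let $G_2$ be the rest: $v$, the remaining vertices, and $V\setminus N[v]$.

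Within each $D_i$, clique number adds across the pieces, so $\sum_i \omega(D_i)\le \omega-1$. This gives $\omega(G_1)\le \omega-\alpha$ with $\alpha\ge 1$ coming from $v$, or $\alpha=2$ when there is a universal vertex in $N(v)$ or a suitable edge.

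It then remains to bound $\chi(G_2)$. The non-neighbourhood $M=V\setminus N[v]$ must be coloured with $2\big((\alpha+2r-2)(\omega-\frac{\alpha+2r}{2})+\frac{3\alpha}{2}\big)$ colours. The key lemma I would aim for is that, by chair-freeness, each vertex of $M$ adjacent to $N(v)$ is complete to all but a bounded part of each co-component. So $M$ partitions into roughly $2r+\alpha$ classes, each contained in the common neighbourhood of a clique of size about $\frac{\alpha+2r}{2}$. Each class therefore has clique number at most $\omega-\frac{\alpha+2r}{2}$, and I would colour each class greedily or by induction-free structure (e.g.\ it is a disjoint union of cliques / claw-free piece) using at most twice its clique number. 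The factor $k=2$ absorbs this doubling. Vertices at distance at least $3$ from $v$ are recoloured reusing the colours of $N(v)$-layers, alternating by parity.

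\textbf{Main obstacle.} The hard part is this last lemma: proving that $M$ splits into few pieces of small clique number and that each piece has $\chi$ at most twice its clique number, with exact counts matching the formula in condition~(2). I would first try the case $r=1$ (where $N(v)$ is co-connected), using the chair to show that every $x\in L_2$ has a neighbourhood in $L_1$ that is a union of co-components minus a clique. I would then generalise to $r\ge 2$ by induction on $r$, using the pairwise completeness to pick one vertex from each $D_i$ and form cliques.
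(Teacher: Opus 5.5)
There is a genuine gap. Your reduction of the second sentence to Theorem~\ref{thm:kld} is fine, and so is the case $\omega\le2$ (cite the $3$-colourability of triangle-free chair-free graphs rather than a ``$C_5$-blowup'' description, which misses odd cycles such as $C_7$). But everything nontrivial is deferred to the ``key lemma'' for colouring $M=V\setminus N[v]$, and the ingredients you propose for it are false.

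\begin{itemize}
\item In the line graph $L(K_n)$ with $n\ge5$ (claw-free, hence chair-free, with $\omega=n-1$), take $v=12$. The graph $G[N(v)]$ is co-connected, since its complement is $K_{n-2,n-2}$ minus a perfect matching. Yet $34\in M$ is adjacent only to $13,14,23,24$. So it is anticomplete to all but four of the $2(n-2)$ vertices of the unique co-component, the opposite of ``complete to all but a bounded part.''
\item Claw-free pieces need not be $2\omega$-colourable. Complements of triangle-free Ramsey graphs have $\alpha\le2$, so they are claw-free, and their chromatic number is of order $\omega^2/\log\omega$.
\item A co-component of $G[N(v)]$ can be a stable set on at least two vertices, so the requirement $\omega(D_i)\ge2$ can fail.
\end{itemize}

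More fundamentally, the architecture itself cannot be closed by induction: $v$ in a maximum clique, $G_1\subseteq N(v)$, and $G_2\supseteq V\setminus N[v]$. Take any chair-free $H$ with $\omega(H)=\omega-1$, a clique $C$ of size $\omega-1$ anticomplete to $H$, and a vertex $u$ complete to $C\cup V(H)$. This graph is chair-free:
\begin{itemize}
\item no chair is centred in $C$, because neighbourhoods of vertices of $C$ are cliques;
\item no chair is centred at $u$, because $N[u]=V$;
\item a chair centred in $H$ avoids $u$ and $C$, so it lies in $H$.
\end{itemize}
Here $v\in C$ lies in the maximum clique $C\cup\{u\}$, yet $G-N[v]=H$.

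Your co-component recipe gives $r=0$ in this graph, which demands $\chi(G)\le4\omega-2$. Any other choice fares no better. Since $\omega(G_1)\le\omega-1$ forces $\alpha+2r\le\omega$, every admissible right-hand side in condition~(2) is at most $\omega^2-2\omega+6=\omega(H)^2+5$ (for $\omega\ge4$). You would therefore need $\chi(H)\le\omega(H)^2+5$ for an arbitrary chair-free $H$. Once $\omega(H)\ge6$, this is stronger than the bound $2\binom{\omega(H)+1}{2}=\omega(H)^2+\omega(H)$ that the induction, or the theorem itself, provides. The choice $v=u$ would work in this example, but you give no rule for choosing $v$.

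The missing idea is that large parts of the far side must be allowed into $G_1$. The paper takes $v_0$ in a maximum clique and lets $H_1$ be the component of $G-N[v_0]$ of largest $\chi$. Then:
\begin{itemize}
\item If $H_1$ is a clique, it sets $G_1=N(v_0)$.
\item If no vertex of $N(v_0)$ is mixed on $H_1$, a chain argument gives $U^*\subseteq N(v_0)$ complete to every non-clique component. Those components then go into $G_1$ via Corollary~\ref{cor:two-blocks}.
\item If $Y$ is a clique, $H_1$ is $(2\omega-3)$-degenerate by Lemma~\ref{lem:degeneracy}.
\item Otherwise it builds a maximal template $Q$ with $N^{\ge3}(Q)=\emptyset$ and $\chi(G[N^2(Q)])\le\omega$. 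In the main case, Lemma~\ref{lem:template-divisible} puts $Z\cup(W\setminus X_0)$, together with a clique $J\subseteq K$, into $G_1$ as two mutually complete pieces. These sets can contain vertices far from $v_0$. The remainder $P\cup(V(Q)\setminus J)\cup X_0\cup N^2(Q)$ has $\chi\le3\omega-2$, because $P$ is the union of two perfect graphs (Lemma~\ref{pperfect}).
\end{itemize}
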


\noindent\textbf{Note added.}
Theorem~\ref{main thm} and its proof were obtained independently before we became aware of the recent preprint of Liu, Sun, Wang, Wu, and Zeng~\cite{LSWWZ2026}. Building on Xu and Zhuang's reduction of perfect weight divisibility for chair-free graphs to the claw-free case, they apply the Chudnovsky--Seymour global structure theorem for claw-free graphs~\cite{CSClawIV,CSClawV} to prove the stronger statement that every claw-free graph is perfectly weight divisible, thereby confirming Sivaraman's conjecture. Their result implies that every chair-free graph is perfectly divisible, is thus $(1,1)$-linearly divisible, and satisfies
$\chi(G)\leq\binom{\omega(G)+1}{2}$,
and therefore supersedes the chromatic consequence of
Theorem~\ref{main thm}. We retain our theorem because its proof uses a
fundamentally different approach, analyzing chair-free graphs directly
through a refined template method and a structural decomposition, and
illustrates how such a decomposition can be converted into linear divisions.

\medskip
\noindent\textbf{Notation and terminology.}
All graphs in this paper are finite and simple. Given a graph $G$ and a
vertex $v\in V(G)$, let $N_G(v)$ denote the neighborhood of $v$, let
$N_G[v]=N_G(v)\cup\{v\}$, and let $d_G(v)=|N_G(v)|$. For a positive integer
$i$ and a set $S\subseteq V(G)$, let
$N_G^i(S)
=
\{u\in V(G)\setminus S:\min\{d_G(u,v):v\in S\}=i\}$,
where $d_G(u,v)$ denotes the distance between $u$ and $v$ in $G$. Thus,
$N_G^1(S)=N_G(S)$. We also let
$N_G^{\geq i}(S)=\bigcup_{j=i}^{\infty}N_G^j(S)$. When $S=\{s\}$, we write $N_G^i(s)$ instead of $N_G^i(\{s\})$. For a subgraph $H$ of $G$, we write
$N_G^i(H)$ for $N_G^i(V(H))$ and $N_G^{\geq i}(H)$ for
$N_G^{\geq i}(V(H))$. When $G$ is clear from the context, we omit the
subscript.

Let $S,T$ be disjoint subsets of $V(G)$. A vertex $v\in V(G)\setminus S$
is \emph{complete to} $S$ if $vs\in E(G)$ for every $s\in S$,
\emph{anticomplete to} $S$ if $vs\notin E(G)$ for every $s\in S$, and
\emph{mixed on} $S$ if it is neither complete nor anticomplete to $S$. We
say that $S$ is complete, respectively anticomplete, to $T$ if every vertex
of $S$ is complete, respectively anticomplete, to $T$. To describe an induced chair, we write $(c,uv,\{a,b\})$ where $c$ is its degree-three vertex, $cuv$ is the subdivided branch, and $a,b$ are the remaining leaves.

\section{Perfect divisibility and the proof of Theorem~\ref{p17}}

In this section, we prove Theorem~\ref{p17}, which we restate for
convenience.

\paleySeventeen*

\begin{figure}[ht]
    \centering
    \resizebox{0.35\textwidth}{!}{\begin{tikzpicture}[
    scale=1,
    Wvertex/.style={circle, draw=black, fill=white, scale=2},
    bvertex/.style={circle, draw=black, fill=black, scale=0.8},
    rvertex/.style={circle, draw=red, fill=red, scale=0.2}
]

% Vertices in cyclic order
\foreach \i in {0,...,16} {
    \pgfmathsetmacro{\ang}{90 - 360*\i/17}
    \node[
        bvertex,
        label={[font=\small]\ang:$\i$}
    ] (v\i) at (\ang:4) {};
}

% Paley graph P(17): quadratic residues mod 17 are ±1, ±2, ±4, ±8
\foreach \i in {0,...,16} {
    \foreach \d in {1,2,4,8} {
        \pgfmathtruncatemacro{\j}{mod(\i+\d,17)}
        \draw (v\i) -- (v\j);
    }
}

\end{tikzpicture}}
    \vskip -8pt
    \caption{The Paley graph on $17$ vertices.}
    \label{fig:p17}
    \vskip 2pt
\end{figure}

\begin{proof}[Proof of Theorem~\ref{p17}]
Let $G=P(17)$, and identify $V(G)$ with $\mathbb Z_{17}$. Distinct vertices
$x,y\in V(G)$ are adjacent if and only if $x-y$ is a quadratic residue
modulo $17$.

We first show that
$\chi(H)\leq\binom{\omega(H)+1}{2}$ for every induced subgraph $H$ of $G$.
The accompanying Python program\footnote{The program \texttt{verify\_paley17.py} is available in a fixed
version of the
\href{https://github.com/wzy3210/k-linear-divisibility-and-chair-free-binding-function/blob/25a130e409431f6ee4a9e97e6d7d216878d64488/verify_paley17.py}{GitHub repository}.} verifies that
$\omega(G)=\alpha(G)=3$, and $\chi(G)=6$. Hence
$\chi(G)=6=\binom{4}{2}=\binom{\omega(G)+1}{2}$.
Let $H$ be a proper induced subgraph of $G$. The assertion is immediate if
$H$ is empty or $\omega(H)=1$. If $\omega(H)=3$, then
$\chi(H)\leq\chi(G)=6=\binom{\omega(H)+1}{2}$. It remains to consider
$\omega(H)=2$. Then $H$ is triangle-free, so $N_H(v)$ is a stable set for
every $v\in V(H)$. Since $\alpha(H)\leq\alpha(G)=3$, we have
$\Delta(H)\leq3$. Applying Brooks' theorem to each component of $H$, and noting that the
possible exceptional odd cycles and complete graphs are also
$3$-colorable, we obtain $\chi(H)\leq3$. Therefore,
$\chi(H)\leq3=\binom{3}{2}=\binom{\omega(H)+1}{2}$.

It remains to show that $G$ is not perfectly divisible. Suppose, for a
contradiction, that $V(G)=A\sqcup B$, where $G[A]$ is perfect and
$\omega(G[B])<\omega(G)=3$. Thus, $G[B]$ is triangle-free. Since
$\alpha(G)=3$, every triangle-free induced subgraph of $G$ has at most $8$
vertices. Indeed, if $|B|\geq9$, then $R(3,4)=9$ implies that $G[B]$
contains either a triangle or a stable set of size $4$, a contradiction.
Hence $|B|\leq8$. Since $G[A]$ is perfect,
\[
|A|
\leq
\alpha(G[A])\chi(G[A])
=
\alpha(G[A])\omega(G[A])
\leq
3\cdot3
=
9.
\]
As $|A|+|B|=17$, we have $|A|=9$ and $|B|=8$.

The program enumerates all $51$ triangle-free $8$-vertex subsets of $G$
and verifies that they form two orbits under the affine automorphisms of
$G$, represented by
\[
B_1=\{0,1,3,6,7,10,12,13\}
\quad\text{and}\quad
B_2=\{0,1,3,4,6,10,11,15\}.
\]
Let $A_i=V(G)\setminus B_i$ for $i\in\{1,2\}$. The graph $G[A_1]$
contains the induced cycle
$2-4-5-14-15-2$,
while $G[A_2]$ contains the induced cycle
$5-7-8-12-14-5$.
Thus, neither $G[A_1]$ nor $G[A_2]$ is perfect. Since automorphisms preserve
the property of being perfect, $G[A]$ is not perfect, a contradiction. Therefore, $P(17)$ is
not perfectly divisible.

Finally, suppose that a graph $F$ contains $P(17)$ as an induced subgraph.
If $F$ were perfectly divisible, then every induced subgraph of $F$ would be
perfectly divisible, including the induced copy of $P(17)$, a contradiction.
\end{proof}

\begin{proposition}\label{prop:closure}
If $G$ and $H$ are $(k,\ell)$-linearly divisible, then $G\vee H$ is $(k,\ell)$-linearly divisible.
\end{proposition}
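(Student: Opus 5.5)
The plan is to classify the connected induced subgraphs of $G\vee H$ and exhibit a $(k,\ell)$-linear division for each directly. Most cases need no inductive information about $G$ and $H$, because the join structure itself supplies pairwise-complete pieces.

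Let $F$ be a connected induced subgraph of $G\vee H$. Write $G'=F\cap G$ and $H'=F\cap H$ for the induced subgraphs on $V(F)\cap V(G)$ and $V(F)\cap V(H)$. If $H'$ is empty, then $F$ is a connected induced subgraph of $G$, so it has a $(k,\ell)$-linear division because $G$ is $(k,\ell)$-linearly divisible. The case where $G'$ is empty is symmetric. So I assume both $G'$ and $H'$ are nonempty; they need not be connected. Then $F=G'\vee H'$ and $\omega:=\omega(F)=\omega(G')+\omega(H')$. I split into cases by the clique numbers of $G'$ and $H'$.

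\emph{Case 1: $\omega(G')\geq2$ and $\omega(H')\geq2$.} I take $r=2$, $D_1=G'$, $D_2=H'$, and $G_1=F$, with $G_2$ empty and $\alpha=0$. The sets $D_1$ and $D_2$ are nonempty and complete to each other, and each has $\omega(D_i)\geq2$. Also $\omega(G_1)\leq\omega-0$, and $\alpha=0$ is admissible when $r\geq2$. Condition~(2) asks for
\[
0\leq k\bigl(2(\omega-2)+0\bigr),
\]
which holds because $\omega\geq4$.

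\emph{Case 2: exactly one of $G'$, $H'$ is stable}, say $\omega(H')=1\leq\omega(G')$ with $\omega(G')\geq2$. I take $r=1$, $D_1=G_1=G'$, $G_2=H'$, and $\alpha=1$. Then $\omega(G_1)=\omega-1$. Since $G_2$ is stable, $\chi(G_2)=1\leq k\cdot1\cdot\omega$, which is the required bound for $\alpha=1$. The symmetric case is handled the same way.

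\emph{Case 3: both $G'$ and $H'$ are stable.} Then $F$ is complete bipartite with $\omega=2$ and $\chi(F)=2$. I take $r=0$, so $G_1$ is empty and $\alpha=\min\{\ell,2\}\in\{1,2\}$. The bound $k\alpha(2-\tfrac{\alpha-1}{2})$ equals $2k$ when $\alpha=1$ and $3k$ when $\alpha=2$. Both are at least $2$, so condition~(2) holds.

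The main point to check carefully is that the side conditions of the definition are met. These are that the $D_i$ are nonempty, that $\alpha\geq1$ when $r=1$, that $\omega(D_i)\geq2$ when $r\geq2$, and that $\alpha\leq\min\{\ell,\omega\}$. I also need to notice that a possibly disconnected $G'$ causes no trouble. It is used only as a single block $D_1$, and no division of $G'$ itself is ever required.
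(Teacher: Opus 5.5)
Your proposal is correct and follows the paper's proof essentially step for step. You use the same reduction to the case where both $V(F)\cap V(G)$ and $V(F)\cap V(H)$ are nonempty, the same three-way split on their clique numbers, and the same choices of $r$, $\alpha$, $D_i$, and $G_2$ in each case: $r=2,\alpha=0$; then $r=1,\alpha=1$ with the stable side as $G_2$; then $r=0,\alpha=\min\{\ell,2\}$ for the complete bipartite case.
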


\begin{proof}
Let $G$ and $H$ be $(k,\ell)$-linearly divisible.
Let $F$ be a connected induced subgraph of $G\vee H$, and let
$F_G=F[V(F)\cap V(G)]$ and
$F_H=F[V(F)\cap V(H)]$.

If one of $F_G$ and $F_H$ is empty, then $F$ is a connected induced
subgraph of $G$ or $H$, and hence admits a $(k,\ell)$-linear division.
We may therefore assume that both are nonempty.

Let $\omega_1=\omega(F_G)$ and $\omega_2=\omega(F_H)$. 
Since $F_G$ is complete to $F_H$, we have $\omega(F)=\omega_1+\omega_2$.
Suppose first that $\omega_1,\omega_2\geq2$.
Let $D_1=F_G$ and $D_2=F_H$. Then $D_1$ and $D_2$ are complete to one another. Hence, we have $G_1=F$ and $G_2=\emptyset$ where $r=2$ and $\alpha=0$. 
Thus, this gives a $(k,\ell)$-linear division of $F$.
Next suppose that $\omega_1=1$ and $\omega_2\geq2$.
Let $D_1=G_1=F_H$ and $G_2=F_G.$
Then $\omega(G_1)=\omega_2=\omega(F)-1$ and $\chi(G_2)=1$. 
Thus, this also gives a $(k,\ell)$-linear division of $F$.
The case $\omega_2=1$ and $\omega_1\geq2$ is symmetric.
Finally, suppose that $\omega_1=\omega_2=1$.
Then both $F_G$ and $F_H$ are stable sets. Since $F_G$ is complete to $F_H$,  $F$ is a  complete bipartite graph, and hence $\chi(F)=\omega(F)=2.$
Taking $G_1=\emptyset$, $G_2=F,$ and $ \alpha=\min\{\ell,2\}$
gives the required division, since $\chi(G_2)=2
\leq
k\alpha\left(2-\frac{\alpha-1}{2}\right)$.

Therefore, $G\vee H$ is $(k,\ell)$-linearly divisible.
\end{proof}

\begin{proposition}\label{prop:p17-linear}
The Paley graph $P(17)$ is $(1,1)$-linearly divisible. Moreover, for every positive integer $t$,  $P(17)\vee K_t$ is $(1,1)$-linearly divisible. 
\end{proposition}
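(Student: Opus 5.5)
The plan is to check the definition directly, using only $\alpha\in\{0,1\}$ since $\ell=1$, and to reduce every case with $\omega(H)=3$ to one good partition of the whole graph $G=P(17)$. Let $H$ be a connected induced subgraph of $G$, and write $\omega=\omega(H)\le 3$. With $k=\ell=1$, the available options are $r=0,\alpha=1$ (needs $\chi(H)\le\omega$) and $r=1,\alpha=1$ (needs $\omega(G_1)\le\omega-1$ and $\chi(G_2)\le\omega$). The option $r\ge2$ is useless here: it needs pairwise complete pieces each of clique number at least $2$, hence $\omega\ge4$.

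\emph{Case $\omega=1$.} Then $H$ is a single vertex, and $r=0,\alpha=1$ works because $\chi(H)=1$.

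\emph{Case $\omega=2$.} The proof of Theorem~\ref{p17} gives $\chi(H)\le3$. If $\chi(H)\le2$, use $r=0,\alpha=1$. Otherwise take a proper $3$-coloring of $H$. Let $D_1=G_1$ be one colour class, a nonempty stable set, so $\omega(G_1)=1=\omega-1$. Let $G_2$ be the union of the other two classes, so $\chi(G_2)\le2$, which matches the bound $k\alpha(\omega-\frac{\alpha-1}{2})=2$.

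\emph{Case $\omega=3$.} Here we need $V(H)=V(G_1)\sqcup V(G_2)$ with $G_1$ triangle-free and $\chi(G_2)\le3$. The key step is to find one partition $V(G)=A\sqcup B$ with $G[B]$ triangle-free and $\chi(G[A])\le3$, and then restrict it to each such $H$. Set $G_1=H[V(H)\cap B]$ and $G_2=H[V(H)\cap A]$. Then $\omega(G_1)\le2$ and $\chi(G_2)\le3=\omega$. If $V(H)\cap B=\emptyset$, then $H$ is itself $3$-colourable and $r=0,\alpha=1$ applies. Otherwise $r=1$, $\alpha=1$ is a valid division.

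\emph{Finding the partition.} This is the main obstacle, since the non-perfect sets $A_1,A_2$ from the proof of Theorem~\ref{p17} must still be shown $3$-colourable. I would first try $B=B_1$ and $A=A_1=V(G)\setminus B_1$, a $9$-vertex graph with $\omega\le3$ and $\alpha\le3$. I would exhibit an explicit partition of $A_1$ into three stable sets and verify it by hand or with the accompanying program. If that fails, I would try $B_2$ or its affine images. A second route is to take a $6$-colouring of $G$ and check whether some three colour classes together induce a triangle-free graph; that union can serve as $B$, and the other three classes as $A$.

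\emph{The join $P(17)\vee K_t$.} $K_t$ is perfect, hence perfectly divisible, hence $(1,1)$-linearly divisible by the remark preceding Theorem~\ref{thm:kld}. Proposition~\ref{prop:closure} then shows $P(17)\vee K_t$ is $(1,1)$-linearly divisible.
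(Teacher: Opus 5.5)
Your argument is essentially the paper's: the same case split on $\omega(H)$, the same colour-class division when $\omega=2$, the restriction of the split $V(P(17))=A_1\sqcup B_1$ when $\omega=3$, and Proposition~\ref{prop:closure} for the join.

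The one step you leave as a plan is the $3$-colourability of $P(17)[A_1]$. Your first attempt succeeds, and this is exactly what the paper supplies: $\{2,8,14\}$, $\{4,9,15\}$, $\{5,11,16\}$ partition $A_1$ into stable sets. Every difference within a class is $\pm5$ or $\pm6$, and these are non-residues modulo $17$, since the residues are $\pm1,\pm2,\pm4,\pm8$.

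The only real difference is how $D_1$ is kept nonempty. The paper uses edge-transitivity of the affine automorphisms to pick an image $\varphi(B_1)$ containing an edge of $H$. You instead restrict the fixed set $B_1$ and fall back to $r=0$, $\alpha=1$ when $V(H)\cap B_1=\emptyset$. Your fallback is valid, because then $H\subseteq A_1$ gives $\chi(H)\le 3=\omega$, and it makes the automorphism step unnecessary.
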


\begin{proof}
Let $H$ be a connected induced subgraph of $P(17)$, and write
$\omega=\omega(H)$. If $\omega=1$, then $r=0$ and $\alpha=1$ give the
required division. Suppose that $\omega=2$. By the proof of
Theorem~\ref{p17}, $H$ is $3$-colorable. If $H$ is bipartite, take $r=0$
and $\alpha=1$. Otherwise, choose a proper $3$-coloring of $H$, let $D_1$ be the
subgraph induced by one color class, and let $G_2=H-V(D_1)$. Then $G_2$ is bipartite, so
$G_1=D_1$, $r=1$, and $\alpha=1$ give a $(1,1)$-linear division.

Assume that $\omega=3$. Let $B_1$ and $A_1$ be as in the proof of
Theorem~\ref{p17}. The stable sets $\{2,8,14\}$, $\{4,9,15\}$, and
$\{5,11,16\}$ partition $A_1$, so $P(17)[A_1]$ is $3$-colorable. Choose an
edge $uv$ of $H$. The affine automorphisms of $P(17)$ act transitively on
its edges, so there is an automorphism $\varphi$ such that
$u,v\in\varphi(B_1)$. Let $B=V(H)\cap\varphi(B_1)$ and
$A=V(H)\cap\varphi(A_1)$. Then $H[B]$ is triangle-free and contains
$uv$, while $H[A]$ is $3$-colorable. Thus, taking $G_1=D_1=H[B]$,
$G_2=H[A]$, $r=1$, and $\alpha=1$ gives a $(1,1)$-linear division.

Since $K_t$ and $P(17)$ are $(1,1)$-linearly divisible, by Proposition~\ref{prop:closure}, $P(17)\vee K_t$ is $(1,1)$-linearly divisible. 
\end{proof}

\section{\texorpdfstring{$(k,\ell)$}{(k,l)}-linear divisibility and the proof of Theorem~\ref{thm:kld}}

We now prove Theorem~\ref{thm:kld}, which we restate for convenience.

\linearlydivisiblebound*

\begin{proof}
We proceed by induction on $\omega$. For each value of $\omega$, it
suffices to prove the result when $G$ is connected, since the result for an
arbitrary graph then follows by applying the connected case to its
components. Let $G$ be connected and write $\omega=\omega(G)$. Define
$f(s)=k\binom{s+1}{2}=\frac{ks(s+1)}{2}$. The result is immediate if
$\omega\leq1$, so assume that $\omega\geq2$. Every induced subgraph of
a $(k,\ell)$-linearly divisible graph is again $(k,\ell)$-linearly divisible.

Apply the definition to $G$, and let
$\alpha,r,G_1,G_2,D_1,\ldots,D_r$ be as in the definition. Suppose first
that $r=0$. Then $G_1$ is empty and $G_2=G$. By condition~(2),
$\chi(G)\leq k\alpha\left(\omega-\frac{\alpha-1}{2}\right)
=f(\omega)-f(\omega-\alpha)\leq f(\omega)$.

Suppose next that $r=1$. Then $G_1=D_1$ and
$\omega(D_1)\leq\omega-\alpha<\omega$. By induction,
$\chi(G_1)\leq f(\omega-\alpha)$, while condition~(2) gives
$\chi(G_2)\leq f(\omega)-f(\omega-\alpha)$. Hence
$\chi(G)\leq f(\omega)$.

Assume that $r\geq2$, and write $\omega_i=\omega(D_i)\geq2$ for
$i\in[r]$. Since $D_1,\ldots,D_r$ are pairwise complete,
$\sum_{i=1}^r\omega_i=\omega(G_1)\leq\omega-\alpha$. Each $D_i$ has
clique number smaller than $\omega$, so induction gives
\[
\chi(G_1)
\leq
\sum_{i=1}^r\chi(D_i)
\leq
\sum_{i=1}^rf(\omega_i).
\]
Since $f$ is increasing and convex, subject to $\omega_i\geq2$ and
$\sum_i\omega_i\leq\omega-\alpha$, the final sum is maximized by taking
$r-1$ of the $\omega_i$ equal to $2$. Therefore,
\[
\chi(G_1)
\leq
f(\omega-\alpha-2r+2)+(r-1)f(2).
\]
Let $t=\alpha+2r-2$. Then $\omega-\alpha-2r+2=\omega-t$ and
$f(2)=3k$, so $\chi(G_1)\leq f(\omega-t)+3k(r-1)$. Condition~(2) also
gives
\[
\chi(G_2)
\leq
k\left(t\left(\omega-\frac{t+2}{2}\right)+\frac{3\alpha}{2}\right).
\]
Consequently,
\begin{align*}
\chi(G)
&\leq \chi(G_1)+\chi(G_2)\\
&\leq f(\omega-t)+3k(r-1)
+k\left(t\left(\omega-\frac{t+2}{2}\right)+\frac{3\alpha}{2}\right)\\
&=f(\omega)-\frac{3kt}{2}+3k(r-1)+\frac{3k\alpha}{2}\\
&=f(\omega),
\end{align*}
where the last equality follows from $t=\alpha+2r-2$.
\end{proof}

The following simple consequence of the definition will be used in the
proof of Theorem~\ref{main thm}.

\begin{corollary}\label{cor:two-blocks}
Let $G$ be a connected graph with $\omega(G)=\omega$, and suppose that
every proper induced subgraph of $G$ is $(2,2)$-linearly divisible.
Suppose that $V(G)=V(D_1)\sqcup V(D_2)\sqcup V(G_2)$, where
$D_1,D_2$, and $G_2$ are induced subgraphs of $G$, the graphs $D_1$ and
$D_2$ are nonempty and complete to one another, and
$\chi(G_2)\leq2\omega-1$. Then $G$ is $(2,2)$-linearly divisible.
\end{corollary}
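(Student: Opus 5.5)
The plan is to exhibit a $(2,2)$-linear division of $G$ itself. Every proper connected induced subgraph of $G$ is a connected induced subgraph of a $(2,2)$-linearly divisible graph, so it already admits such a division by hypothesis. Hence producing one for $G$ shows $G$ is $(2,2)$-linearly divisible.

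Write $\omega_i=\omega(D_i)\geq1$ for $i\in\{1,2\}$. Since $D_1$ is complete to $D_2$, we have $\omega_1+\omega_2=\omega(G[V(D_1)\cup V(D_2)])\leq\omega$. In particular $\omega\geq2$. I will split into cases according to whether each $\omega_i$ equals $1$ or is at least $2$, mirroring the proof of Proposition~\ref{prop:closure}.

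\emph{Case $\omega_1,\omega_2\geq2$.} Take $r=2$, $\alpha=0$, with the given $D_1,D_2$ as the blocks and $G_2$ as the remaining part. Then $\omega(G_1)=\omega_1+\omega_2\leq\omega-\alpha$, and each $\omega(D_i)\geq2$. Condition~(2) with $k=2$, $r=2$, $\alpha=0$ requires
\[
\chi(G_2)\leq 2\cdot 2\left(\omega-2\right)=4\omega-8.
\]
Since $\omega\geq\omega_1+\omega_2\geq4$, we have $4\omega-8\geq2\omega-1$. So the hypothesis $\chi(G_2)\leq2\omega-1$ suffices.

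\emph{Case exactly one of $\omega_1,\omega_2$ equals $1$.} Say $\omega_1=1$ and $\omega_2\geq2$; the other case is symmetric. Then $V(D_1)$ is a stable set. Take $r=1$, $\alpha=1$, the single block $D_2$, and the remaining part $G[V(G_2)\cup V(D_1)]$. Every clique of $D_2$ extends by a vertex of the nonempty set $V(D_1)$, so $\omega(D_2)\leq\omega-1$. Coloring $D_1$ with one new color gives
\[
\chi(G[V(G_2)\cup V(D_1)])\leq 2\omega-1+1=2\omega=2\cdot1\cdot\left(\omega-0\right),
\]
which is exactly the bound in condition~(2).

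\emph{Case $\omega_1=\omega_2=1$.} Here $D_1\cup D_2$ induces a complete bipartite graph, and I will take $r=0$ and $\alpha=\min\{2,\omega\}=2$, using $\omega\geq2$. Then
\[
\chi(G)\leq 2+(2\omega-1)=2\omega+1\leq 2\cdot2\left(\omega-\tfrac12\right)=4\omega-2,
\]
where the last inequality holds because $\omega\geq2$. This is the required bound.

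The only step needing care is checking the side conditions of the definition: $\alpha\leq\min\{\ell,\omega\}$, $\alpha\geq1$ when $r=1$, $\omega(D_i)\geq2$ when $r\geq2$, and $\alpha=\min\{\ell,\omega\}$ when $r=0$. The first case is the tightest numerically, and there the fact that $\omega\geq4$ is what makes the inequality work. A possibly empty $G_2$ causes no trouble.
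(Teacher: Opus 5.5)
Your proof is correct and follows the paper's argument: you use the same $r=2$, $\alpha=0$ division when $\omega_1,\omega_2\geq2$ (relying on $\omega\geq4$), and the same $r=1$, $\alpha=1$ division with $G_1=D_2$ when exactly one $\omega_i$ equals $1$. The only difference is the subcase $\omega_1=\omega_2=1$, where the paper again takes $r=1$, $\alpha=1$ with $G_1=D_1$ (a stable set) and moves $D_2$ into the second part to get $\chi\leq2\omega$, whereas your choice $r=0$, $\alpha=2$ with $\chi(G)\leq2\omega+1\leq4\omega-2$ is equally valid.
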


\begin{proof}
By the hypothesis on proper induced subgraphs, it suffices to give a
$(2,2)$-linear division of $G$. Let $\omega_i=\omega(D_i)$ for
$i\in\{1,2\}$. Suppose first that $\omega_1,\omega_2\geq2$. Since
$D_1$ and $D_2$ are complete to one another,
$\omega\geq\omega_1+\omega_2\geq4$, and hence
$2\omega-1\leq4(\omega-2)$. Thus, taking
$G_1=G[V(D_1)\cup V(D_2)]$, $r=2$, and $\alpha=0$ gives a
$(2,2)$-linear division of $G$.

Suppose now, without loss of generality, that $\omega_1=1$. If
$\omega_2\geq2$, let $G_1=D_2$ and
$\widetilde G_2=G[V(D_1)\cup V(G_2)]$; otherwise, let $G_1=D_1$ and
$\widetilde G_2=G[V(D_2)\cup V(G_2)]$. In either case,
$\omega(G_1)\leq\omega-1$ and
$\chi(\widetilde G_2)\leq\chi(G_2)+1\leq2\omega$. Thus, $r=1$ and
$\alpha=1$ give a $(2,2)$-linear division of $G$.
\end{proof}

\section{A variant of the template method}\label{section:template}

We now introduce an adapted version of the template method and prove several
preliminary lemmas for Theorem~\ref{main thm}. Let $G$ be a chair-free graph
with clique number $\omega=\omega(G)$.
A \emph{$q$-template} in $G$ is an induced complete $q$-partite graph $Q=G[V_1\cup\cdots\cup V_q]$ such that $|V_q|=2$ and $|V_i|=1$ for every $i\in[q-1]$. Let $V_q=\{a_1,a_2\}$, $V_i=\{v_i\}$ for $i\in[q-1]$, and
$K=V(Q)\setminus V_q$.
A $q$-template $Q$ is \emph{maximal} if the following conditions hold.
\begin{enumerate}[(E1)]
\item No vertex in $V(G)\setminus V(Q)$ is complete to $V(Q)$.
\item There do not exist nonadjacent vertices $c,d\in V(G)\setminus V(Q)$
and an index $i\in\{1,2\}$ such that $\{c,d\}$ is complete to $K$ and both
$c$ and $d$ are adjacent to $a_i$ and nonadjacent to $a_{3-i}$.
\end{enumerate}

We first give a simple extension property.

\begin{lemma}\label{lem:template-extension}
Let $Q_0$ be a $q$-template in a graph $G$, with last part
$\{a_1,a_2\}$, and let $K_0=V(Q_0)\setminus\{a_1,a_2\}$. Then there is a
maximal $q'$-template $Q$ such that $K_0\subseteq V(Q)$ and
$\{a_1,a_2\}\cap V(Q)\neq\emptyset$. Moreover, every vertex of $K_0$
remains in a singleton part of $Q$.
\end{lemma}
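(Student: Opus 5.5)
We prove the lemma by an extremal choice followed by local modifications. Among all $q'$-templates $Q$ in $G$ such that $K_0\subseteq V(Q)$, every vertex of $K_0$ lies in a singleton part of $Q$, and $\{a_1,a_2\}\cap V(Q)\neq\emptyset$, choose one maximizing $|V(Q)|$. This family is nonempty since $Q_0$ belongs to it, and $|V(Q)|\leq|V(G)|$, so a maximizer exists. We then show that a maximizer satisfies (E1) and (E2). For each condition, a violation will yield a strictly larger template that still lies in the family. Write the last part of $Q$ as $\{b_1,b_2\}$ and set $K=V(Q)\setminus\{b_1,b_2\}$, so that $K_0\subseteq K$.

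\emph{(E1).} Suppose some $x\in V(G)\setminus V(Q)$ is complete to $V(Q)$. Then $K\cup\{x\}$ is a clique complete to the nonadjacent pair $\{b_1,b_2\}$. Hence $G[V(Q)\cup\{x\}]$ is an induced $(q'+1)$-template with singleton parts $K\cup\{x\}$ and last part $\{b_1,b_2\}$. The vertices of $K_0$ are still singletons, and the last part is unchanged, so it still meets $\{a_1,a_2\}$. This contradicts the maximality of $|V(Q)|$.

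\emph{(E2).} Suppose there are nonadjacent $c,d\notin V(Q)$ and an index $i$ such that $\{c,d\}$ is complete to $K$, both $c$ and $d$ are adjacent to $b_i$, and neither is adjacent to $b_{3-i}$. Form a new template with singleton parts $K\cup\{b_i\}$ and last part $\{c,d\}$. This is a valid template: $K\cup\{b_i\}$ is a clique because $b_i$ is complete to $K$; $c$ and $d$ are nonadjacent; and $\{c,d\}$ is complete to $K\cup\{b_i\}$. It is also induced, since we simply take the subgraph induced on $K\cup\{b_i,c,d\}$, which is complete $(q'+1)$-partite by the adjacencies just listed. Its vertex count equals $|V(Q)|+1$, because we drop $b_{3-i}$ and add $c$ and $d$, so it is strictly larger.

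The main obstacle is the invariant that the template still meets $\{a_1,a_2\}$. The old last part meets $\{a_1,a_2\}$, but $b_i$ need not be the vertex of $\{a_1,a_2\}$ that it contains. Suppose the old last part meets $\{a_1,a_2\}$ only in $b_{3-i}$, which is discarded. To handle this, we strengthen the invariant to ``$\{a_1,a_2\}\cap V(Q)\neq\emptyset$'' with $a_j$ allowed to sit in a singleton part, and we check that the set of vertices of $Q$ only ever grows, except for the discarded $b_{3-i}$. The cleanest fix is to make the ordering lexicographic: maximize $|V(Q)|$ first, and among those, maximize $|V(Q)\cap\{a_1,a_2\}|$. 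We observe that when the (E2) move discards a vertex of $\{a_1,a_2\}$, the vertex $b_i$ is either the other vertex of $\{a_1,a_2\}$ or an earlier-added vertex. In the very first step $\{b_1,b_2\}=\{a_1,a_2\}$, so $b_i$ is one of the $a$'s and remains in $K$ forever; afterwards the singleton set $K$ only grows under both moves. So it suffices to track that once an $a_j$ enters $K$ it stays there.

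Concretely, we run the procedure iteratively rather than extremally: start from $Q_0$, and apply an (E1) move or an (E2) move while either is possible. Each move increases $|V(Q)|$, so the procedure terminates, and the final template is maximal. The singleton set $K$ never loses vertices, so $K_0$ stays in singletons. The last part either contains an $a_j$ (initially, and after (E1) moves only) or, after the first (E2) move, some $a_i$ has entered $K$ permanently. Hence $\{a_1,a_2\}\cap V(Q)\neq\emptyset$ throughout.
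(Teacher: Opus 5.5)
Your final iterative argument is correct and is essentially the paper's proof. You repair (E1)/(E2) failures starting from $Q_0$, note that singleton parts are never lost, observe that the first (E2) move promotes one of $a_1,a_2$ into a singleton part, and use that each move strictly enlarges the template (you bound by $|V(G)|$, the paper by the clique size). The extremal/lexicographic framing you begin with does not work on its own, because a size-maximizer in your family can violate (E2) only through a move that leaves the family; the iterative version is the one to keep.
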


\begin{proof}
Starting with $Q_0$, repeat the following operations. If (E1) fails for a
vertex $z$, add $\{z\}$ as a singleton part. If (E2) fails for nonadjacent
vertices $c,d$ and $a_i$, add $\{a_i\}$ as a singleton part and replace the
last part by $\{c,d\}$. After each operation, relabel the two vertices in the
last part as $a_1,a_2$. Each operation produces a template with one more part
and preserves every existing singleton part. The first time the second
operation is used, one of the two vertices in the original last part becomes
a singleton and is preserved thereafter. Since every $q$-template contains a
clique of size $q$, the process terminates with the required maximal template.
\end{proof}

Given a maximal $q$-template $Q$, partition $N(Q)$ as follows:
\begin{align*}
P&=\{v\in N(Q):v\text{ is mixed on }V_q\},\\
Z&=\{v\in N(Q):v\text{ is complete to }V_q\},\\
W&=\{v\in N(Q):v\text{ is anticomplete to }V_q\}.
\end{align*}
For each $I\subseteq K$, let
\[
W_I
=
\{v\in W:v\text{ is complete to }I\text{ and anticomplete to }K\setminus I\}.
\]
For a component $X$ of $G[W]$, let
\[
Z_X=\{z\in Z:z\text{ is complete to }V(X)\}.
\]
Let $X_0$ denote the union of the clique components of $G[W]$. Observe that
$N(Q)=P\sqcup Z\sqcup W$. Every component of $G[W\setminus X_0]$ is
non-clique and hence contains two nonadjacent vertices. Liu, Schroeder, Wang,
and Yu~\cite[Lemma~2.2]{LSWY2023} proved the following structural lemma; see
Figure~\ref{fig:tem}.

\begin{figure}[htb!]
    \centering
    \resizebox{0.65\textwidth}{!}{\input{template_decomp.tikz}}
    \vskip -2pt
    \caption{The structure of $N[Q]$.}
    \label{fig:tem}
\end{figure}

\begin{lemma}[Liu--Schroeder--Wang--Yu]
\label{lem:chair_structure}
Let $G$ be a chair-free graph, let $Q$ be a maximal template in $G$, and
let $P,Z,W$ be defined as above. Then:
\begin{enumerate}[(1)]
\item $W_I$ is anticomplete to $W_{I'}$ for distinct subsets $I,I'$ of $K$;

\item for every $z\in Z$ and every component $X$ of $G[W]$, $z$ is either complete or anticomplete to $V(X)$;

\item for every component $X$ of $G[W\setminus X_0]$, $Z_X$ is
complete to $Z\setminus Z_X$.
\end{enumerate}
\end{lemma}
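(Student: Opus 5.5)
Each of the three items is proved by contradiction: assume the conclusion fails, then use the maximality conditions (E1) and (E2) of $Q$, together with the complete multipartite structure of $Q$, to produce an induced chair. Throughout, recall that $Q$ is complete $q$-partite with $K=\{v_1,\dots,v_{q-1}\}$ a clique complete to the nonadjacent pair $\{a_1,a_2\}$. Every vertex of $W$ has a neighbour in $K$, because it lies in $N(Q)$ and is anticomplete to $\{a_1,a_2\}$.

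\emph{Item (1).} Take $w\in W_I$ and $w'\in W_{I'}$ adjacent, with $I\neq I'$, and choose $v\in I\setminus I'$ (or symmetrically $v\in I'\setminus I$).
\begin{itemize}
\item Then $v$ is adjacent to $w$, $w$ is adjacent to $w'$, and $v$ is not adjacent to $w'$.
\item The vertex $v$ is adjacent to both $a_1$ and $a_2$, which are nonadjacent to each other and to $w$ and $w'$.
\item Hence $(v,ww',\{a_1,a_2\})$ is an induced chair, a contradiction.
\end{itemize}

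\emph{Item (2).} Suppose $z\in Z$ is mixed on a component $X$ of $G[W]$. Since $X$ is connected, there are adjacent $x,y\in V(X)$ with $zx\in E(G)$ and $zy\notin E(G)$.
\begin{itemize}
\item The vertex $z$ is adjacent to $a_1$ and $a_2$, while $x$ and $y$ are not.
\item Therefore $(z,xy,\{a_1,a_2\})$ is an induced chair.
\end{itemize}
This item needs no maximality.

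\emph{Item (3).} This is the main obstacle. Let $X$ be a non-clique component of $G[W]$, and suppose $z\in Z_X$ and $z'\in Z\setminus Z_X$ are nonadjacent. By item (2), $z'$ is anticomplete to $V(X)$. Since $X$ is not a clique, pick nonadjacent $x,x'\in V(X)$; they have a common neighbour $y$ in $X$, which I will also use.
\begin{itemize}
\item The vertex $z$ is complete to $\{x,x',a_1,a_2\}$.
\item The vertex $z'$ is adjacent to $a_1$ and $a_2$, but not to $z$, $x$ or $x'$.
\item So $(z,a_1z',\{x,x'\})$ is an induced chair, provided $a_1$ is nonadjacent to $x$ and $x'$. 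This holds because $x,x'\in W$.
\end{itemize}
The nonadjacencies required for this chair are:
\begin{itemize}
\item $x\not\sim x'$, by the choice of $x,x'$;
\item $a_1\not\sim x,x'$, because $x,x'\in W$;
\item $z'\not\sim x,x'$, by item (2) and $z'\notin Z_X$;
\item $z\not\sim z'$, by assumption.
\end{itemize}
All of these hold, so no use of $K$ is needed.

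If I have mis-recalled some adjacency, the fallback is to invoke (E2): two nonadjacent vertices complete to $K$ and seeing exactly one of $a_1,a_2$ are forbidden. I would also use (E1), that no vertex is complete to $V(Q)$, to rule out degenerate configurations such as $z$ being complete to $K$. I expect item (3) to be where these conditions may be needed, for example if $W$-vertices must attach through $K$ rather than directly.

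The order is therefore (1), (2), then (3), with (3) resting on (2).
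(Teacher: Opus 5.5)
Your proof is correct and is the natural direct argument: the chairs $(v,ww',\{a_1,a_2\})$, $(z,xy,\{a_1,a_2\})$ and $(z,a_1z',\{x,x'\})$ are all induced, and none of them uses the maximality conditions (E1)--(E2), so the fallback paragraph can be deleted. The paper itself gives no proof and simply cites Liu--Schroeder--Wang--Yu. One aside is inaccurate: two nonadjacent vertices of $X$ need not have a common neighbour in $X$ (e.g., the ends of an induced $P_4$), but you never use $y$, so this is harmless.
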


By Lemma~\ref{lem:chair_structure}(1), every component of $G[W]$ is
contained in a unique set $W_I$ for some $I\subseteq K$.
We also need the following consequences of chair-freeness.

\begin{lemma}\label{lm:W}
\begin{enumerate}[(1)]
\item $W$ is anticomplete to $N_G^2(Q)$.

\item Let $X$ be a component of $G[W\setminus X_0]$ contained in $W_I$.
Then $Z\setminus Z_X$ is complete to $I$, and is
anticomplete to $N_G^2(Q)$.
\end{enumerate}
\end{lemma}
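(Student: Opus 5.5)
For part (1), I will argue by contradiction: assuming an edge between $W$ and $N_G^2(Q)$, I build an induced chair rooted in the template, using $(c,uv,\{a,b\})$ notation. Part (2) follows the same pattern, with a pair of nonadjacent vertices from the non-clique component $X$ serving as the two leaves. Throughout, recall that the two vertices $a_1,a_2$ of $V_q$ are nonadjacent and complete to $K$.

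For (1), suppose that $w\in W$ is adjacent to some $y\in N_G^2(Q)$. Since $w\in N(Q)$ and $w$ is anticomplete to $V_q$, $w$ has a neighbor $v_i\in K$. Then $v_i$ is adjacent to $a_1,a_2,w$. The vertex $w$ is nonadjacent to $a_1$ and $a_2$, and $y$ is nonadjacent to all of $V(Q)$. Hence $(v_i,wy,\{a_1,a_2\})$ is an induced chair, a contradiction.

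For (2), let $X$ be a component of $G[W\setminus X_0]$ with $X\subseteq W_I$. Because $X$ is not a clique, it contains two nonadjacent vertices $x,x'$. Every vertex of $W$ has a neighbor in $K$ but none in $V_q$, so $I\neq\emptyset$, and both $x$ and $x'$ are complete to $I$. Let $z\in Z\setminus Z_X$. By Lemma~\ref{lem:chair_structure}(2), $z$ is anticomplete to $V(X)$.

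First I show that $z$ is complete to $I$. Suppose instead that $z$ is nonadjacent to some $v\in I$. The vertex $v$ is adjacent to $x,x',a_1$, and $a_1$ is adjacent to $z$ since $z\in Z$. The vertex $a_1$ is nonadjacent to $x$ and $x'$ because $X\subseteq W$. Also $z$ is nonadjacent to $v,x,x'$, and $x,x'$ are nonadjacent. Thus $(v,a_1z,\{x,x'\})$ is an induced chair, a contradiction.

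Next I show that $z$ is anticomplete to $N_G^2(Q)$. Suppose $z$ is adjacent to some $y\in N_G^2(Q)$, and pick any $v\in I$. By what we just proved, $z$ is adjacent to $v$. The vertex $y$ is nonadjacent to $v$, and by part (1) $y$ is nonadjacent to $x$ and $x'$. The vertex $z$ is nonadjacent to $x$ and $x'$, and $x,x'$ are nonadjacent. Hence $(v,zy,\{x,x'\})$ is an induced chair, a contradiction.

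The only step needing care is choosing the center of each chair so that the leaves avoid the long branch. In (2), taking $v\in I$ as the center with the nonadjacent pair $x,x'\in X$ as leaves handles both assertions uniformly. This is precisely where the hypothesis that $X$ is not a clique component is used.
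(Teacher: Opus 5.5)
Your proposal is correct and follows the paper's proof essentially verbatim. Part (1) uses the chair $(v_i,wy,\{a_1,a_2\})$. Part (2) uses $(v,a_1z,\{x,x'\})$ to get completeness to $I$, and then $(v,zy,\{x,x'\})$, with part (1) supplying the nonadjacency of $y$ to $x,x'$, exactly as the paper does.
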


\begin{proof}
(1) Suppose that $wd\in E(G)$ for some $w\in W$ and $d\in N_G^2(Q)$.
Since $w\in N(Q)$ and is anticomplete to $V_q$, it has a neighbor
$v_i\in K$. Then $(v_i,wd,\{a_1,a_2\})$ is an induced chair, a
contradiction.

(2) Note that $I\neq\emptyset$, since vertices of $X$ belong to
$N(Q)$ and are anticomplete to $V_q$. Suppose that some $z\in Z\setminus
Z_X$ is nonadjacent to a vertex $v_i\in I$. By
Lemma~\ref{lem:chair_structure}(2), $z$ is anticomplete to $X$.
Since $X$ is a connected non-clique graph, it contains nonadjacent vertices
$x_1,x_2$. Then $(v_i,a_1z,\{x_1,x_2\})$ is an induced chair, a
contradiction.

Suppose that $zd\in E(G)$ for some $z\in Z\setminus Z_X$ and
$d\in N_G^2(Q)$. Choose $v_i\in I$ and nonadjacent vertices
$x_1,x_2\in V(X)$. By (1), $d$ is anticomplete to $X$, and by the preceding paragraph,
$zv_i\in E(G)$. Thus, $(v_i,zd,\{x_1,x_2\})$ is an induced chair, a
contradiction. Hence, (2) holds.
\end{proof}

The next lemma, which shows that $P$ is the union of two perfect graphs, is
similar to Lemma~2.5 of~\cite{LSWY2023}. We include the proof for
completeness.

\begin{lemma}\label{pperfect}
Let $Q$ be a maximal $q$-template in $G$. For each $i\in\{1,2\}$, let
$P_i=P\cap N(a_i)$. Then $G[P_i]$ is perfect and
$\chi(G[P])\leq2(\omega-1)$.
\end{lemma}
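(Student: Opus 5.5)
Since every vertex of $P$ is mixed on $V_q=\{a_1,a_2\}$, each $v\in P$ is adjacent to exactly one of $a_1,a_2$. Hence $P=P_1\sqcup P_2$, with $P_i$ the vertices of $P$ adjacent to $a_i$ and not to $a_{3-i}$. The bound $\chi(G[P])\leq 2(\omega-1)$ then follows once we show two things: each $G[P_i]$ is perfect, and $\omega(G[P_i])\leq\omega-1$. The clique bound is immediate, since every vertex of $P_i$ is adjacent to $a_i$, so any clique of $P_i$ extends by $a_i$. Perfection then gives $\chi(G[P_i])=\omega(G[P_i])\leq\omega-1$, and coloring $P_1$ and $P_2$ with disjoint palettes yields the claim. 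All the work is in proving that $G[P_i]$ is perfect.

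By symmetry take $i=1$, and write $a=a_1$, $b=a_2$. By the Strong Perfect Graph Theorem it suffices to show that $G[P_1]$ has no odd hole and no odd antihole. The plan is to first extract structure from chair-freeness and maximality, and then rule out the forbidden subgraphs.

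\textbf{Step 1: a stable-set claim.} I would show that every two nonadjacent vertices $c,d\in P_1$ have a common non-neighbour in $K$. Suppose instead that some $c,d\in P_1$ are nonadjacent and $\{c,d\}$ is complete to $K$. Then $c,d$ are both adjacent to $a_1$, both nonadjacent to $a_2$, and complete to $K$, which is exactly the configuration forbidden by (E2). So such a pair has a common $v_j\in K$ missing both, or at least some $v_j$ missing one of them. I would then try to strengthen this with a chair: if $v_j$ is adjacent to $c$ but not to $d$, consider configurations such as $(a, c\,v_j, \{d, \cdot\})$ or $(v_j, c\,d, \cdot)$, using $b$, which is adjacent to $v_j$ but not to $c$ or $d$. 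The aim is a clean statement along the lines of: ``$N(c)\cap K$ and $N(d)\cap K$ are comparable, or the two vertices are adjacent.''

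\textbf{Step 2: excluding odd holes.} Let $C=c_1\cdots c_m$ be an odd hole in $P_1$ with $m\geq5$. Every $c_j$ is adjacent to $a$ and not to $b$, so $a$ is complete to $C$. Now take nonadjacent $c_j,c_{j+2}$. By Step 1 there is some $v\in K$ related to them, and $v$ is adjacent to both $a$ and $b$. The chair $(a,\,v\,b,\,\{c_j,c_{j+2}\})$ is induced precisely when $v$ misses both $c_j$ and $c_{j+2}$. Hence every $v\in K$ hits at least one vertex of each nonadjacent pair of $C$. Combined with Step 1, this should force some $v_j$ to be adjacent to all but a clique of $C$. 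Then I would apply the chair $(v,\,\cdot\,,\cdot)$ with leaves on the hole, or use (E1)/(E2) again, to reach a contradiction.

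\textbf{Step 3: excluding odd antiholes.} For an odd antihole of length at least $7$, every vertex has two nonadjacent neighbours inside the antihole. I would use the chair $(c,\,a\,b,\,\{x,y\})$: if $c$ is adjacent to nonadjacent $x,y$ in the antihole, this chair is blocked only by edges to $a$. Since $a$ is complete to the antihole, I would rather use a vertex $v\in K$, giving $(c,\,v\,b,\,\{x,y\})$, where $v$ is adjacent to $c$ but to neither $x$ nor $y$.

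The main obstacle is making Step 1 strong enough, i.e. identifying exactly how the sets $N(\cdot)\cap K$ behave on $P_1$. My first attempt would be to show that nonadjacent vertices of $P_1$ have nested $K$-neighbourhoods; a hole or antihole contradicts such nesting via a cyclic argument.
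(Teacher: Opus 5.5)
Your outer reduction is correct and is how the paper finishes. The vertices of $P$ split as $P_1\sqcup P_2$, and $a_i$ is complete to $P_i$, so $\omega(G[P_i])\leq\omega-1$; perfection of each $G[P_i]$ then gives the bound. The gap is the perfection argument, because the structural claims it rests on are wrong.

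\textbf{Step~1 is backwards.} You assert that nonadjacent $c,d\in P_1$ have a common non-neighbour in $K$. This never happens: if $v\in K$ misses both, then $(a_1,v\,a_2,\{c,d\})$ is an induced chair, which is exactly your own Step~2 observation. The true statement is the reverse. For each $v\in K$, the non-neighbours of $v$ in $P_1$ form a clique, and (E2) says nonadjacent $c,d\in P_1$ are not both complete to $K$.

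\textbf{Nesting is false.} Take $K=\{v_1,v_2\}$ and add to $Q$ two nonadjacent vertices $c,d$ with $N(c)=\{a_1,v_2\}$ and $N(d)=\{a_1,v_1\}$. This six-vertex graph is chair-free, $Q$ is maximal, and $c,d\in P_1$. Yet $N(c)\cap K$ and $N(d)\cap K$ are incomparable.

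\textbf{Steps~2 and~3 do not close.} The chair $(c,v\,b,\{x,y\})$ in Step~3 needs some $v\in K$ nonadjacent to both of the nonadjacent vertices $x,y\in P_1$, which is exactly the excluded configuration. So it can never be induced. Step~2 ends at ``should force'' without exhibiting a chair.

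\textbf{The missing idea} is a chair centred at a vertex of $K$ that uses $a_2$ as a leaf. Let $u_0,\ldots,u_\ell$ be an odd hole or odd antihole in $P_1$. By (E2) there is some $v\in K$ missing some vertex, say $u_0$. The paper organizes this choice as follows: $j_u$ is the first index with $uv_j\notin E(G)$, each class $P_1^j$ is a clique by the chair above, and one takes $u_0$ with $j_{u_0}$ minimum and $v=v_{j_{u_0}}$.

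For a hole, the non-neighbours of $v$ on it form a clique containing $u_0$. After reversing, $v$ is adjacent to $u_1$ and $u_{\ell-1}$, and $(v,u_1u_0,\{u_{\ell-1},a_2\})$ is an induced chair. Your idea of taking both leaves on the hole cannot work in general: on a $5$-hole, $u_3$ is the only vertex of the hole nonadjacent to both $u_0$ and $u_1$.

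For an antihole, consecutive vertices $u_i,u_{i+1}$ are nonadjacent. If $vu_i\notin E(G)$, the clique property forces $vu_{i+1}\in E(G)$. Then $vu_{i+2}\in E(G)$ would give the induced chair $(v,u_{i+2}u_i,\{u_{i+1},a_2\})$, so $vu_{i+2}\notin E(G)$. Stepping by $2$ around the odd cycle makes $v$ anticomplete to the antihole. This contradicts the fact that the non-neighbours of $v$ in $P_1$ form a clique.
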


\begin{proof}
By symmetry, it suffices to prove that $G[P_1]$ is perfect. For each
$u\in P_1$, let $j_u$ be the smallest index $j\in[q-1]$ such that
$uv_j\notin E(G)$, if such an index exists, and let $j_u=q$ otherwise. For
$j\in[q]$, let $P_1^j=\{u\in P_1:j_u=j\}$.

Each $P_1^j$ is a clique. Suppose first that $j<q$ and that nonadjacent
vertices $c,d$ belong to $P_1^j$. Then $(a_1,v_ja_2,\{c,d\})$ is an
induced chair, a contradiction. If $P_1^q$ contains nonadjacent vertices
$c,d$, then these vertices, together with the singleton parts of $Q$ and the
singleton part $\{a_1\}$, form a $(q+1)$-template, contradicting the
maximality of $Q$.

Suppose that $G[P_1]$ contains an odd hole
$u_0,u_1,\ldots,u_\ell$, where $\ell\geq4$ is even and subscripts are taken
modulo $\ell+1$. Choose $u_0$ so that $j_{u_0}$ is minimum, and let
$j=j_{u_0}$. If $j=q$, then the hole is contained in the clique $P_1^q$, a
contradiction. Thus, $j<q$. We have $j_{u_1},j_{u_\ell}\geq j$, and since
$u_1u_\ell\notin E(G)$, these two indices are not both equal to $j$.
After reversing the hole if necessary, assume that $j_{u_1}>j$. Since
$u_0$ is nonadjacent to $u_i$ for $2\leq i\leq\ell-1$, we also have
$j_{u_i}>j$ for these indices. Hence $v_j$ is adjacent to $u_1$ and
$u_{\ell-1}$ and nonadjacent to $u_0$. Then
$(v_j,u_1u_0,\{u_{\ell-1},a_2\})$ is an induced chair, a contradiction.

Now suppose that $G[P_1]$ contains an odd antihole
$u_0,u_1,\ldots,u_\ell$, where $\ell\geq6$ is even and subscripts are taken
modulo $\ell+1$. Again choose $u_0$ so that $j_{u_0}$ is minimum, and let
$j=j_{u_0}$. If $j=q$, then the antihole is contained in the clique $P_1^q$,
a contradiction. Thus, $j<q$. We claim that if $v_ju_i\notin E(G)$, then
$v_ju_{i+2}\notin E(G)$. Indeed, $v_ju_i\notin E(G)$ implies
$j_{u_i}=j$. Since $P_1^j$ is a clique and $u_iu_{i+1}\notin E(G)$, we
have $j_{u_{i+1}}>j$, so $v_ju_{i+1}\in E(G)$. If
$v_ju_{i+2}\in E(G)$, then $(v_j,u_{i+2}u_i,\{u_{i+1},a_2\})$ is an
induced chair, a contradiction. The claim follows.

Starting from $v_ju_0\notin E(G)$ and repeatedly applying the claim, we conclude that $v_j$ is anticomplete to the antihole. Thus $j_{u_i}\leq j$ for every $i$, and the minimality of $j$ gives $j_{u_i}=j$ for every $i$.
This places the antihole in the clique $P_1^j$, a contradiction.

By the Strong Perfect Graph Theorem, $G[P_1]$ is perfect. By symmetry,
$G[P_2]$ is also perfect. Since $a_i$ is complete to $P_i$, we have
$\omega(G[P_i])\leq\omega-1$ for $i\in\{1,2\}$. Therefore,
$\chi(G[P])
\leq
\chi(G[P_1])+\chi(G[P_2])
\leq
2(\omega-1)$.
\end{proof}

We are now ready for the main template lemma.

\begin{lemma}\label{lem:template-divisible}
Let $G$ be a connected chair-free graph with clique number
$\omega=\omega(G)\geq3$, and let $Q$ be a maximal $q$-template in $G$, where
$q\geq2$. Suppose that every proper induced subgraph of $G$ is
$(2,2)$-linearly divisible, $N_G^{\geq3}(Q)=\emptyset$, and
$\chi(G[N_G^2(Q)])\leq\omega$. Then $G$ is $(2,2)$-linearly divisible.
\end{lemma}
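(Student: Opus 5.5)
We aim for a $(2,2)$-linear division of $G$ in which almost all of $G$ is placed in the bounded part $G_2$, and the part $G_1$ is a set of clique number at most $\omega-1$ or $\omega-2$. Since every proper induced subgraph is $(2,2)$-linearly divisible by hypothesis, it suffices to exhibit one division of $G$ itself. The vertex set splits as
\[
V(G)=V(Q)\sqcup P\sqcup Z\sqcup W\sqcup N_G^2(Q).
\]
We have $N_G^{\geq3}(Q)=\emptyset$, $\chi(G[N^2(Q)])\leq\omega$ by hypothesis, and $\chi(G[P])\leq 2(\omega-1)$ by Lemma~\ref{pperfect}.

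The natural choice is $r=1$, $\alpha=2$. The budget for $G_2$ is then $2\cdot2(\omega-\tfrac12)=4\omega-2$, and we need $\omega(G_1)\leq\omega-2$. Alternatively, $r=1$, $\alpha=1$ gives budget $2\omega$ with $\omega(G_1)\leq\omega-1$.

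The first thing to try is to put $Z$ and $W$ (or most of them) into $G_1$ and everything else into $G_2$. In $G_2$ we would then color as follows:
\begin{itemize}
\item $P$ with $2\omega-2$ colors;
\item $N^2(Q)$ with $\omega$ colors, possibly sharing colors with $Q$, since $N^2(Q)$ is anticomplete to $V(Q)$;
\item $Q$ with its $q\leq\omega$ colors.
\end{itemize}
This gives roughly $2\omega-2+\omega=3\omega-2\leq4\omega-2$ colors.

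The difficulty is to ensure that $\omega(G[Z\cup W])\leq\omega-2$. This holds for $Z$, because $Z$ is complete to $\{a_1,a_2\}$... but $a_1a_2\notin E$, so only one of them helps, giving $\omega(Z)\leq\omega-1$. Likewise each $W_I$ is complete to $I\neq\emptyset$.

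So I would refine the plan using Lemma~\ref{lem:chair_structure} and Lemma~\ref{lm:W}. For a non-clique component $X\subseteq W_I$, the set $Z\setminus Z_X$ is complete to $I$ and complete to $Z_X$. This suggests an $r=2$ structure with $D_1=G[Z_X]$-type and $D_2=G[Z\setminus Z_X]$ pieces, handled by Corollary~\ref{cor:two-blocks}. That corollary requires only $\chi(G_2)\leq2\omega-1$ for the remainder, but with $P$ in the remainder this is too tight. I therefore expect to combine the following choices:
\begin{enumerate}[(i)]
\item If $G[W\setminus X_0]$ is empty, take $G_1=G[Z\cup W]$. Its clique components and $Z$ are each complete to a vertex of $Q$ ($a_1$ for $Z$, a vertex of $K$ for $W$). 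Since $W$ is anticomplete to $Z$ except via whole components, a clique in $Z\cup W$ consists of a part in $Z$ plus a part in one component, and it extends by $v_i\in I$ when $Z$-vertices of the clique are complete to that component. I would argue via chair-freeness that such cliques extend by a vertex of $K$ to give $\omega(G_1)\leq\omega-1$. Then use $\alpha=1$ with budget $2\omega$: color $G_2=Q\cup P\cup N^2$ using $P_1\cup a_2$, $P_2\cup a_1$ and $K\cup N^2$. This needs care, because $2\omega$ is less than $3\omega-2$ for $\omega\geq3$.

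It is therefore more plausible to use $\alpha=2$. Put $P$ into $G_2$ with $2\omega-2$ colors. Place $K\cup\{a_1,a_2\}\cup N^2(Q)$ in $G_2$ with $\omega$ further colors; $N^2(Q)$ is anticomplete to $Q$, and $P$ may need fresh colors. This totals $3\omega-2\leq4\omega-2$, leaving $\omega$ spare colors. These spare colors can absorb $Z$ or part of it, for instance $Z$ colored as a subgraph of $N(a_1)$ if we could show it is perfect, or else we leave $Z$ in $G_1$.
\item If some non-clique component $X$ exists, use the complete pair $Z_X$ and $Z\setminus Z_X$ to apply the $r\geq2$ case, or else the fact that $Z\setminus Z_X$ is anticomplete to $N^2(Q)$.
\end{enumerate}

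The main obstacle is bounding the clique number of the part sent to $G_1$ (essentially $Z\cup W$) by $\omega-2$ while keeping $P\cup N^2(Q)\cup V(Q)$ within $4\omega-2$ colors. Equivalently, one must exploit that every maximal clique of $G[Z\cup W]$ extends by two vertices of $Q$ or $P$. I would first try to prove, via chair-freeness and maximality conditions (E1) and (E2), that each clique of $Z\cup W$ is complete to some vertex of $K$. I would then move $W$-clique components into $G_2$, which is possible since they are anticomplete to $N^2(Q)$ and to $V_q$, and they share colors with $\{a_1,a_2\}$ and $N^2(Q)$. That would leave only $Z$, which has $\omega(Z)\leq\omega-1$, for $G_1$ with $\alpha=1$, provided the remainder can be colored with $2\omega$ colors. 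If it cannot, I would fall back on the split into $Z_X$ and $Z\setminus Z_X$ with Corollary~\ref{cor:two-blocks}.
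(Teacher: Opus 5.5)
Your proposal does not reach a valid division in either case, and the concrete plans you sketch do not fit the budgets. When $W=X_0$ you keep all of $P$ in $G_2$. Then $G_2\supseteq V(Q)\cup P\cup N_G^2(Q)$, and your coloring by $P_1\cup\{a_2\}$, $P_2\cup\{a_1\}$, $K\cup N_G^2(Q)$ uses up to $3\omega-2>2\omega$ colors, which exceeds the $\alpha=1$ budget, and you give no way to save $\omega-2$ colors. Your final plan, with only $Z$ in $G_1$, has the same problem, since the remainder is $G[L]$ with $L=P\cup V(Q)\cup X_0\cup N_G^2(Q)$. Switching to $\alpha=2$ needs $\omega(G_1)\le\omega-2$ with $Z\subseteq V(G_1)$, and you have no argument for this; the only available bound is $\omega(G[Z])\le\omega-1$, via $a_1$. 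Moreover, the intermediate claim you hope to prove, that every clique of $G[Z\cup W]$ is complete to a vertex of $K$, is false in general. For $q=2$, condition (E1) forces every $z\in Z$ to be nonadjacent to $v_1$, so $Z$ is anticomplete to $K$.

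The missing idea is to take $G_1=D_1=G[N(a_1)]$. Since $N(a_1)=K\cup Z\cup P_1$, this single set of clique number at most $\omega-1$ absorbs $K$, $Z$ \emph{and} $P_1$. The remainder $\{a_1,a_2\}\cup P_2\cup W\cup N_G^2(Q)$ then needs at most $(\omega-1)+\omega$ colors, because $V(Q)\cup X_0\cup N_G^2(Q)$ can be colored with $\omega$ colors. Your remark about letting clique components of $W$ share colors with $\{a_1,a_2\}$ and $N_G^2(Q)$ is the right ingredient for that step.

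When $W\neq X_0$, your fallback via Corollary~\ref{cor:two-blocks} is, as you note yourself, too weak. Pairing $Z_X$ with $Z\setminus Z_X$ also leaves $W\setminus X_0$ unplaced, allows $Z_X=\emptyset$, and ignores the requirement $\omega(D_i)\ge2$ when $r\ge2$.

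The paper first shows, by chair arguments, that the sets $I_i$ and $Z_i$ attached to the non-clique components $X_1,\ldots,X_s$ each form chains. Taking $J$ to be the smallest $I_i$ and $Z_1$ the smallest $Z_i$, the graphs $D_1=G[Z_1\cup J]$ and $D_2=G[(Z\setminus Z_1)\cup(W\setminus X_0)]$ are nonempty and complete to each other. The rest, $G[P\cup(V(Q)\setminus J)\cup X_0\cup N_G^2(Q)]$, has chromatic number at most $3\omega-2$. The paper then finishes in three subcases:
\begin{enumerate}[(a)]
\item If $\omega(D_1)\ge2$ and $\omega\ge6$, it takes $r=2$, $\alpha=0$, whose budget is $4\omega-8\ge3\omega-2$.
\item If $\omega(D_1)\ge2$ and $\omega\in\{4,5\}$, it takes $r=1$, $\alpha=2$, moving the triangle-free block into $G_2$; that block is $3$-colorable because it is chair-free.
\item If $\omega(D_1)=1$, so that $J=\{v_j\}$, it uses that $v_j$ is complete to $D_2\cup(V(Q)\setminus\{v_j\})\cup X_0$. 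After removing a stable set $S$ from $N_G^2(Q)$ to push its clique number below $\omega$, this gives a $G_1$ of clique number at most $\omega-1$. It then puts $D_1\cup P\cup S$, which needs at most $2\omega$ colors, into $G_2$.
\end{enumerate}
None of these ingredients appears in your proposal: not the chain structure, not the choice of $J$, not the $r=2$ budget computation, and not the $\omega(D_1)=1$ subcase. As written, it is an exploration rather than a proof.
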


\begin{proof}
Let $P,Z,W,X_0$ be defined as above. Since
$N_G^{\geq3}(Q)=\emptyset$,
$$V(G)=V(Q)\sqcup P\sqcup Z\sqcup W\sqcup N_G^2(Q).$$
Let $L=P\cup V(Q)\cup X_0\cup N_G^2(Q)$.

\begin{claim}\label{cl:L}
$\chi(G[V(Q)\cup X_0\cup N_G^2(Q)])\leq\omega$ and
$\chi(G[L])\leq3\omega-2$.
\end{claim}

\begin{proof}
By the definition of $N_G^2(Q)$ and Lemma~\ref{lm:W}(1),
$N_G^2(Q)$ is anticomplete to $V(Q)\cup X_0$. Since
$\chi(G[N_G^2(Q)])\leq\omega$, it is enough to color
$G[V(Q)\cup X_0]$ with $\omega$ colors.

Fix a palette of $\omega$ colors. Since $q\leq\omega$, color $Q$ by giving
the vertices of $K$ distinct colors and giving $a_1,a_2$ one additional
color. Let $X$ be a component of $G[X_0]$ contained in $W_I$. Since $X$
is a clique complete to $I$, we have $|X|+|I|\leq\omega$. Moreover, $X$
is anticomplete to $V(Q)\setminus I$. We may therefore color $X$ with
$|X|$ distinct colors not used on $I$. Distinct components of $G[X_0]$
are anticomplete, so these colors may be reused. Hence,
$\chi(G[V(Q)\cup X_0\cup N_G^2(Q)])\leq\omega$. Lemma~\ref{pperfect}
now gives
\[
\chi(G[L])
\leq
\chi(G[P])+\chi(G[V(Q)\cup X_0\cup N_G^2(Q)])
\leq
3\omega-2.\qedhere
\]
\end{proof}

Suppose first that $W=X_0$, and let
$R=G[\{a_2\}\cup P_2\cup W\cup N_G^2(Q)]$. By Claim~\ref{cl:L},
$\chi(G[\{a_2\}\cup W\cup N_G^2(Q)])\leq\omega$, while
$\chi(G[P_2])\leq\omega-1$. Thus, $\chi(R)\leq2\omega-1$. Moreover,
$G[N(a_1)]$ is nonempty and has clique number at most $\omega-1$, while
$a_1$ is anticomplete to $R$. Hence, taking $G_1=D_1=G[N(a_1)]$,
$G_2=G[V(R)\cup\{a_1\}]$, $r=1$, and $\alpha=1$ gives a
$(2,2)$-linear division.

Assume that $W\neq X_0$, and let $X_1,\ldots,X_s$ be the non-clique
components of $G[W]$. For each $i\in[s]$, let $I_i\subseteq K$ satisfy
$V(X_i)\subseteq W_{I_i}$, and let
$Z_i=\{z\in Z:z\text{ is complete to }X_i\}$.

\begin{claim}\label{IZ}
Let $F$ be a component of $G[W]$ contained in $W_I$. Then $I$ and $I_i$
are comparable under inclusion for every $i\in[s]$. Moreover,
$Z_1,\ldots,Z_s$ form a chain under inclusion.
\end{claim}

\begin{proof}
Fix $i\in[s]$. The first assertion is immediate if $F=X_i$. Otherwise,
if $I$ and $I_i$ are incomparable, choose $c\in I_i\setminus I$,
$d\in I\setminus I_i$, nonadjacent vertices $x_1,x_2\in V(X_i)$, and
$y\in V(F)$. Then $(c,dy,\{x_1,x_2\})$ is an induced chair, a
contradiction.

Now let $i,j\in[s]$ be distinct. If $Z_i$ and $Z_j$ are incomparable,
choose $z\in Z_i\setminus Z_j$, $z'\in Z_j\setminus Z_i$, nonadjacent
vertices $x_1,x_2\in V(X_i)$, and $y\in V(X_j)$. By
Lemma~\ref{lem:chair_structure}(2)--(3), the vertices $(z,z'y,\{x_1,x_2\})$ induce a
chair, a contradiction.
\end{proof}

Applying the first assertion of Claim~\ref{IZ} with $F=X_j$ shows that
$I_1,\ldots,I_s$ form a chain. Each $I_i$ is nonempty because
$X_i\subseteq W\subseteq N(Q)$ and $X_i$ is anticomplete to $V_q$. Choose
$J\in\{I_1,\ldots,I_s\}$ such that $J\subseteq I_i$ for every $i\in[s]$.
After relabeling, assume that
$Z_1\subseteq Z_2\subseteq\cdots\subseteq Z_s$. Let
\[
\begin{aligned}
D_1&=G[Z_1\cup J],\\
D_2&=G[(Z\setminus Z_1)\cup(W\setminus X_0)],\\
G_2&=G[P\cup(V(Q)\setminus J)\cup X_0\cup N_G^2(Q)].
\end{aligned}
\]
Then $V(G)=V(D_1)\sqcup V(D_2)\sqcup V(G_2)$. Note that $D_1$ and $D_2$
are nonempty and complete to one another. Indeed, $Z_1$ is complete to
$Z\setminus Z_1$ by Lemma~\ref{lem:chair_structure}(3), and
$Z_1\subseteq Z_i$ for every $i\in[s]$, so $Z_1$ is complete to
$W\setminus X_0$. The choice of $J$ shows that $J$ is complete to
$W\setminus X_0$, while Lemma~\ref{lm:W}(2), applied to $X_1$, shows that
$Z\setminus Z_1$ is complete to $J$. Moreover, $D_2$ is anticomplete to
$N_G^2(Q)$ by Lemma~\ref{lm:W}(1)--(2), and $\omega(D_2)\geq2$ because
$W\setminus X_0$ contains the connected non-clique graph $X_1$. By
Claim~\ref{cl:L}, $\chi(G_2)\leq3\omega-2$.

Suppose first that $\omega(D_1)\geq2$. Since $D_1$ and $D_2$ are complete
to one another, $\omega(D_1)+\omega(D_2)\leq\omega$. If $\omega\geq6$,
then $3\omega-2\leq4\omega-8$, so taking
$G_1=G[V(D_1)\cup V(D_2)]$, $r=2$, and $\alpha=0$ gives a
$(2,2)$-linear division.

We may therefore assume that $\omega\in\{4,5\}$. Choose
$i\in\{1,2\}$ so that $\omega(D_i)$ is maximum. Then
$\omega(D_i)\leq\omega-2$ and $\omega(D_{3-i})=2$. Since $D_{3-i}$ is
$3$-colorable~\cite{Randerath-Schiermeyer2002}, the graph induced by
$V(D_{3-i})\cup V(G_2)$ has chromatic number at most
$3+(3\omega-2)=3\omega+1\leq4\omega-2$. Thus, taking $G_1=D_i$,
$r=1$, and $\alpha=2$ gives a $(2,2)$-linear division.

We may therefore assume that $\omega(D_1)=1$. Since $J$ is a nonempty
clique contained in $V(D_1)$, we have $J=\{v_j\}$ for some $j\in[q-1]$.
The vertex $v_j$ is complete to $X_0$. Indeed, let $F$ be a component of
$G[X_0]$ contained in $W_I$. The set $I$ is nonempty and is comparable
with $J$ by Claim~\ref{IZ}; hence $v_j\in I$, so $v_j$ is complete to
$F$.

Let $H=G[V(D_2)\cup(V(Q)\setminus\{v_j\})\cup X_0]$. The vertex $v_j$
is complete to $H$, so $\omega(H)\leq\omega-1$. Moreover, $H$ is
anticomplete to $N_G^2(Q)$ by the definition of $N_G^2(Q)$ and
Lemma~\ref{lm:W}(1)--(2). Choose a stable set $S\subseteq N_G^2(Q)$ such
that $\omega(G[N_G^2(Q)\setminus S])\leq\omega-1$. Such a set exists:
take $S=\emptyset$ if $\omega(G[N_G^2(Q)])\leq\omega-1$. Otherwise, fix
an $\omega$-coloring of $G[N_G^2(Q)]$ and take one color class as $S$.
Every $\omega$-clique uses all $\omega$ colors, so
$\omega(G[N_G^2(Q)\setminus S])\leq\omega-1$.

Let $D=G[V(H)\cup(N_G^2(Q)\setminus S)]$ and
$\widetilde G_2=G[V(D_1)\cup P\cup S]$. Since $H$ is anticomplete to
$N_G^2(Q)$, the graph $D$ is nonempty and satisfies
$\omega(D)\leq\omega-1$. Moreover,
$\chi(\widetilde G_2)\leq1+(2\omega-2)+1=2\omega$. Hence, taking
$G_1=D$, $r=1$, and $\alpha=1$ gives a $(2,2)$-linear division.
\end{proof}

\section{Structure of chair-free graphs}\label{section:structure}

In this section, we establish several structural properties of chair-free
graphs that will be used in the proof of Theorem~\ref{main thm}. Throughout
this section, let $G$ be a connected chair-free graph with clique number
$\omega(G)=\omega$.

\medskip
\noindent\textbf{Global setup.}
Let $v_0$ be a vertex contained in a maximum clique of $G$. Then
$\omega(G[N(v_0)])=\omega-1$. Assume that $G-N[v_0]\neq\emptyset$, and let
$H_1,\ldots,H_t$ be the components of $G-N[v_0]$, where $H_1$ has maximum
chromatic number. When $H_1$ is not a clique, relabel the components so
that, for some $t'\in[t]$, $H_1,\ldots,H_{t'}$ are not cliques
and $H_{t'+1},\ldots,H_t$ are cliques.

\begin{figure}[htb!]
    \centering
    \begin{minipage}{0.42\textwidth}
        \centering
        \resizebox{0.75\textwidth}{!}{\begin{tikzpicture}[
    line width=0.5pt,
    comp/.style={
        draw,
        rounded corners=2pt,
        minimum width=8mm,
        minimum height=13mm,
        inner sep=2pt
    },
    neighborhood/.style={
        draw,
        ellipse,
        minimum width=3.8cm,
        minimum height=1.25cm
    },
    vertex/.style={
        circle,
        fill,
        inner sep=1.3pt
    }
]
    \node[neighborhood] (Nv) at (0,0) {};
    \node at (0,0) {$N(v_0)$};

    \node[vertex,label=below:$v_0$] (v) at (0,-1.05) {};

    \node[comp] (H1) at (-2.7,1.85) {$H_1$};
    \node[comp] (H2) at (-0.9,1.85) {$H_2$};
    \node at (0.9,1.85) {$\cdots$};
    \node[comp] (Ht) at (2.7,1.85) {$H_t$};

    \draw[dashed] (H2.east) -- (Ht.west);

    \draw (H1.south) -- (Nv.145);
    \draw (H2.south) -- (Nv.110);
    \draw (Ht.south) -- (Nv.35);

    \draw[densely dotted] (Nv.225) -- (v);
    \draw[densely dotted] (Nv.315) -- (v);
\end{tikzpicture}}
    \end{minipage}
    \hfill
    \begin{minipage}{0.52\textwidth}
        \centering
        \resizebox{0.7\textwidth}{!}{\begin{tikzpicture}[
    line width=0.5pt,
    yset/.style={
        draw,
        minimum width=10mm,
        minimum height=7mm,
        inner sep=1pt
    },
    rset/.style={
        draw,
        ellipse,
        minimum width=12mm,
        minimum height=10mm,
        inner sep=1pt
    },
    nregion/.style={
        draw,
        ellipse,
        minimum width=3.2cm,
        minimum height=1.15cm
    }
]
    \coordinate (xpos) at (0,0);
    \coordinate (vpos) at (0,-1.05);

    % Neighborhood of v
    \node[nregion] (Nv) at (xpos) {};
    \node at (2.2,0.12) {$N(v_0)$};

    % The sets Y_0,Y_1,\ldots,Y_m
    \node[yset] (Y0)  at (-3.3,1.45) {$Y_0$};
    \node[yset] (Y1)  at (-1.8,1.45) {$Y_1$};
    \node at (0,1.45) {$\cdots$};
    \node[yset] (Ym1) at (1.8,1.45) {$Y_{m-1}$};
    \node[yset] (Ym)  at (3.3,1.45) {$Y_m$};

    % The components R_1,\ldots,R_m
    \node[rset] (R1)  at (-1.8,2.55) {$R_1$};
    \node at (0,2.55) {$\cdots$};
    \node[rset] (Rm1) at (1.8,2.55) {$R_{m-1}$};
    \node[rset] (Rm)  at (3.3,2.55) {$R_m$};

    % Adjacencies between R_i and Y_i
    \draw (R1.south)  -- (Y1.north);
    \draw (Rm1.south) -- (Ym1.north);
    \draw (Rm.south)  -- (Ym.north);

    % Pairwise completeness, represented by crosses
    \draw (Y0.north east) -- (Y1.south west);
    \draw (Y0.south east) -- (Y1.north west);

    \draw (Ym1.north east) -- (Ym.south west);
    \draw (Ym1.south east) -- (Ym.north west);

    % x is complete to the sets Y_i
    \draw (Y0.south)  -- (xpos);
    \draw (Y1.south)  -- (xpos);
    \draw (Ym1.south) -- (xpos);
    \draw (Ym.south)  -- (xpos);

    % Adjacencies between v and N(v)
    \draw[dashed] (vpos) -- (Nv.235);
    \draw[dashed] (vpos) -- (Nv.305);

    % Vertices x and v
    \fill (xpos) circle (1.4pt);
    \node[below=1pt] at (xpos) {$x$};

    \fill (vpos) circle (1.4pt);
    \node[below=1pt] at (vpos) {$v_0$};
\end{tikzpicture}}
    \end{minipage}
    \vskip -8pt
    \caption{(a) The global structure of $G$ and (b) the structure of
    $G[N[v_0]\cup V(H_1)]$.}
    \label{fig:chairfree}
    \vskip 2pt
\end{figure}

Since $G$ is connected, some vertex of $N(v_0)$ has a neighbor in $H_1$.
For each such vertex $x$, let $m(x)$ be the number of components of
$H_1-N(x)$. Choose $x\in N(v_0)$ so that $m(x)>0$ whenever possible. Let
$Y=N(x)\cap V(H_1)$, and let $R_1,\ldots,R_m$ be the components of $H_1-Y$,
where $m=m(x)$. For each $i\in[m]$, let
$Y_i=\{y\in Y:N(y)\cap V(R_i)\neq\emptyset\}$,
and
$Y_0=Y\setminus\bigcup_{i=1}^mY_i$.
Each $Y_i$ with $i\in[m]$ is nonempty because $H_1$ is connected. 
Figure~\ref{fig:chairfree} illustrates this decomposition.

\begin{lemma}\label{t>1}
If $t'\geq2$, $m\geq1$, and $Y$ is not a clique, then some vertex
$w\in N(v_0)$ is complete to $\bigcup_{i=1}^{t'}V(H_i)$.
\end{lemma}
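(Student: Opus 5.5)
The plan is to use chair-freeness to show that a vertex of $N(v_0)$ cannot be mixed on one component $H_i$ while having a neighbour in another. From this, we pick $w$ touching as many of $H_1,\ldots,H_{t'}$ as possible and show it touches all of them.

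\emph{Step 1 (complete-or-mixed dichotomy).} Let $u\in N(v_0)$ have a neighbour in two distinct components $H_i$ and $H_j$. We claim $u$ is complete to both. Suppose $u$ is mixed on $H_i$. Since $H_i$ is connected, there is an edge $hh'$ of $H_i$ with $uh\in E(G)$ and $uh'\notin E(G)$. Let $g\in V(H_j)$ be a neighbour of $u$. Then $(u,hh',\{v_0,g\})$ is an induced chair, because $v_0$ is anticomplete to $H_i\cup H_j$ and $H_i$ is anticomplete to $H_j$. The same argument applies to $H_j$.

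Since $m\geq1$, the vertex $x$ is mixed on $H_1$. Hence $x$ is anticomplete to every $H_j$ with $j\ge2$.

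\emph{Step 2 (every vertex touching $H_j$, $j\geq2$, touches $H_1$).} Fix $j\in\{2,\ldots,t'\}$ and a vertex $z\in N(v_0)$ with a neighbour $g\in V(H_j)$; such a $z$ exists since $G$ is connected. Since $Y$ is not a clique, choose nonadjacent $y_1,y_2\in Y$. Suppose $z$ is anticomplete to $H_1$.
\begin{itemize}
\item If $xz\in E(G)$, then $(x,zg,\{y_1,y_2\})$ is an induced chair, since $x$ misses $g$ by Step~1.
\item If $xz\notin E(G)$, then $(x,v_0z,\{y_1,y_2\})$ is an induced chair.
\end{itemize}
Either way we get a contradiction. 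So $z$ has a neighbour in $H_1$, and by Step~1 it is complete to $H_1$ and to $H_j$. In particular, since $t'\ge2$, some vertex of $N(v_0)$ is complete to $H_1\cup H_2$.

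\emph{Step 3 (extremal choice).} Among vertices of $N(v_0)$ complete to $H_1$, choose $w$ complete to the largest number of components in $\{H_2,\ldots,H_{t'}\}$. By Steps~1 and~2, "touching" and "complete to" coincide for such vertices. Suppose $w$ misses some $H_j$ with $j\le t'$, and let $H_k$ with $2\le k\le t'$ be a component that $w$ is complete to. Let $z_j$ be as in Step~2, so $z_j$ is complete to $H_1\cup H_j$. Since $H_j$ is not a clique, pick nonadjacent $a,b\in V(H_j)$, and pick $h\in V(H_1)$.
\begin{itemize}
\item If $wz_j\notin E(G)$, then $(z_j,hw,\{a,b\})$ is an induced chair.
\item So $wz_j\in E(G)$. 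If $z_j$ missed $H_k$, then for $g\in V(H_k)$ the triple $(z_j,wg,\{a,b\})$ is an induced chair.
\end{itemize}
Hence $z_j$ is complete to $H_k$ for every such $k$, and also to $H_1$ and $H_j$. Thus $z_j$ beats $w$, contradicting the choice of $w$. Therefore $w$ is complete to $\bigcup_{i=1}^{t'}V(H_i)$.

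The main obstacle is Step~3, where a single $w$ must be produced. Here the non-clique hypothesis on $H_j$ for $j\le t'$ supplies the two nonadjacent leaves, and the extremal choice avoids tracking components one at a time. The hypothesis that $Y$ is not a clique is used only in Step~2 to force every vertex touching another non-clique component to reach $H_1$.
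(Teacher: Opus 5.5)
Your proof is correct and follows essentially the same route as the paper. You show that $x$ is anticomplete to $H_2,\ldots,H_t$, that every vertex of $N(v_0)$ touching some $H_j$ is complete to $H_1\cup H_j$ (the paper's argument, including the two chairs through $x$), and that a vertex $w$ complete to $H_1$ and to the most components among $H_2,\ldots,H_{t'}$ is complete to all of them. The remaining differences, namely your unified dichotomy with leaves $\{v_0,g\}$, showing $I_w\subseteq I_{z_j}$ directly, and the chair $(z_j,hw,\{a,b\})$ in place of $(w_i,v_0w,\{h_{i1},h_{i2}\})$, are cosmetic.
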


\begin{proof}
Choose nonadjacent vertices $y_1,y_2\in Y$. We first show that $x$ is
anticomplete to $H_i$ for every $i\in\{2,\ldots,t\}$. Otherwise, let
$a_i\in V(H_i)$ be adjacent to $x$. Since $m\geq1$, there are adjacent
vertices $y\in Y$ and $r\in V(R_1)$. Then
$(x,yr,\{v_0,a_i\})$ is an induced chair, a contradiction.

Fix $i\in\{2,\ldots,t\}$. Since $G$ is connected, there are vertices
$w_i\in N(v_0)$ and $a_i\in V(H_i)$ with $w_ia_i\in E(G)$. If $w_i$ is
mixed on $H_1$, then connectedness of $H_1$ gives adjacent vertices
$b_1,b_2\in V(H_1)$ such that $w_ib_1\in E(G)$ and
$w_ib_2\notin E(G)$. Now $(w_i,b_1b_2,\{v_0,a_i\})$ is an induced
chair, a contradiction. Thus, $w_i$ is either complete or anticomplete to
$H_1$.

Suppose that $w_i$ is anticomplete to $H_1$. If $w_ix\notin E(G)$, then
$(x,v_0w_i,\{y_1,y_2\})$ is an induced chair; if $w_ix\in E(G)$, then
$(x,w_ia_i,\{y_1,y_2\})$ is an induced chair. Both cases are
contradictions, so $w_i$ is complete to $H_1$.

More generally, if a vertex $w\in N(v_0)$ is complete to $H_1$ and has a
neighbor in $H_i$, then $w$ is complete to $H_i$. Otherwise, connectedness
of $H_i$ gives adjacent vertices $h_1,h_2\in V(H_i)$ such that
$wh_1\in E(G)$ and $wh_2\notin E(G)$, and
$(w,h_1h_2,\{y_1,y_2\})$ is an induced chair, a contradiction. In
particular, each $w_i$ is complete to $H_1\cup H_i$.

For each vertex $w\in N(v_0)$ complete to $H_1$, let
$I_w=\{i\in[t']:w\text{ is complete to }H_i\}$. Choose such a vertex $w$
with $|I_w|$ maximum. Suppose that $I_w\neq[t']$, and choose
$i\in[t']\setminus I_w$. Since $1\in I_w$, we have $i\geq2$.
$w_i$ is complete to $H_1\cup H_i$, so $i\in I_{w_i}$. By maximality,
there is an index $j\in I_w\setminus I_{w_i}$.

Thus, $w$ is anticomplete to $H_i$, while $w_i$ is anticomplete to $H_j$.
Choose nonadjacent vertices $h_{i1},h_{i2}\in V(H_i)$ and a vertex
$h_j\in V(H_j)$. If $ww_i\notin E(G)$, then
$(w_i,v_0w,\{h_{i1},h_{i2}\})$ is an induced chair. If $ww_i\in E(G)$,
then $(w_i,wh_j,\{h_{i1},h_{i2}\})$ is an induced chair. Both cases are
contradictions. Therefore, $I_w=[t']$, and $w$ is complete to
$\bigcup_{i=1}^{t'}V(H_i)$.
\end{proof}

\begin{lemma}\label{lem:Y-pairwise-complete}
The sets $Y_0,Y_1,\ldots,Y_m$ are pairwise disjoint and pairwise complete.
Moreover, if $Y_i$ is not complete to $R_i$ for some $i\in[m]$, then $Y_j$
is a clique for every $j\in\{0,\ldots,m\}\setminus\{i\}$.
\end{lemma}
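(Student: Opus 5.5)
We argue directly from chair-freeness, using three facts from the global setup: $v_0$ is anticomplete to $H_1$ (since $H_1$ is a component of $G-N[v_0]$), $x$ is adjacent to $v_0$ and to every vertex of $Y$, and $x$ is anticomplete to $\bigcup_{i=1}^m V(R_i)$ (since these components avoid $Y=N(x)\cap V(H_1)$). Each claim will be obtained by exhibiting a forbidden induced chair, with $v_0$ serving as a convenient far leaf or end of the subdivided branch.

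\emph{Disjointness.} $Y_0$ is disjoint from the other $Y_i$ by definition, so let $i,j\in[m]$ be distinct. Suppose some $y\in Y_i\cap Y_j$, with neighbors $r_i\in V(R_i)$ and $r_j\in V(R_j)$. Since $R_i$ and $R_j$ are distinct components of $H_1-Y$, we have $r_ir_j\notin E(G)$. Neither $r_i$ nor $r_j$ is adjacent to $x$ or to $v_0$, and $v_0y\notin E(G)$. Hence $(y,xv_0,\{r_i,r_j\})$ is an induced chair, a contradiction.

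\emph{Pairwise completeness.} Let $y_i\in Y_i$ and $y_j\in Y_j$ be nonadjacent with $i\neq j$; by symmetry we may take $i\geq1$ (if $i=0$, swap the roles of $i$ and $j$). Choose a neighbor $r_i\in V(R_i)$ of $y_i$. By the disjointness just proved, $y_j\notin Y_i$, so $y_j$ has no neighbor in $R_i$; in particular $y_jr_i\notin E(G)$. This holds for $j\ge1$ by disjointness and for $j=0$ by the definition of $Y_0$. Also $xr_i\notin E(G)$, and $v_0$ is anticomplete to $\{y_i,y_j,r_i\}$. Therefore $(x,y_ir_i,\{v_0,y_j\})$ is an induced chair, a contradiction.

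\emph{The second statement.} Suppose $Y_i$ is not complete to $R_i$ for some $i\in[m]$, and pick $y\in Y_i$ with a nonneighbor in $R_i$. Since $y$ has a neighbor in $R_i$ and $R_i$ is connected, there are adjacent $r_1,r_2\in V(R_i)$ with $yr_1\in E(G)$ and $yr_2\notin E(G)$. If some $Y_j$ with $j\neq i$ contained nonadjacent $y',y''$, then:
\begin{itemize}
\item $y$ is complete to $\{y',y''\}$ by pairwise completeness;
\item neither $y'$ nor $y''$ has a neighbor in $R_i$, again by disjointness (or the definition of $Y_0$).
\end{itemize}
So $(y,r_1r_2,\{y',y''\})$ is an induced chair, a contradiction.

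The only delicate point is bookkeeping: the completeness and second-part arguments need that vertices of $Y_j$ have no neighbors in $R_i$, which is why disjointness is proved first.
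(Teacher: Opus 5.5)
Your proof is correct and takes the same approach as the paper. You use the same three induced chairs, namely $(y,xv_0,\{r_i,r_j\})$ for disjointness, $(x,y_\ell r,\{y_s,v_0\})$ for pairwise completeness, and $(y,r_1r_2,\{a,b\})$ for the clique statement, with the same bookkeeping that vertices of $Y_j$, $j\neq i$, have no neighbours in $R_i$.
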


\begin{proof}
Suppose that $y\in Y_i\cap Y_j$ for some $1\leq i<j\leq m$. Choose
$r_i\in V(R_i)$ and $r_j\in V(R_j)$ adjacent to $y$. Then
$(y,xv_0,\{r_i,r_j\})$ is an induced chair, a contradiction. Hence
$Y_1,\ldots,Y_m$ are pairwise disjoint, and $Y_0$ is disjoint from each of them by definition.

Suppose next that $y_i\in Y_i$ and $y_j\in Y_j$ are nonadjacent for some
$0\leq i<j\leq m$. Choose $\ell\in\{i,j\}\setminus\{0\}$, let $y_s$ be
the other vertex, and choose $r\in V(R_\ell)$ adjacent to $y_\ell$. Since
$y_s\notin Y_\ell$, it is nonadjacent to $r$. Thus
$(x,y_\ell r,\{y_s,v_0\})$ is an induced chair, a contradiction.
Therefore, $Y_0,Y_1,\ldots,Y_m$ are pairwise complete.

Finally, suppose that $Y_i$ is not complete to $R_i$. Choose $y\in Y_i$
and adjacent vertices $r_1,r_2\in V(R_i)$ such that $yr_1\in E(G)$ and
$yr_2\notin E(G)$. If $Y_j$ is not a clique for some
$j\in\{0,\ldots,m\}\setminus\{i\}$, choose nonadjacent vertices
$a,b\in Y_j$. $y$ is adjacent to $a,b$, while $a,b$ are
anticomplete to $R_i$. Hence $(y,r_1r_2,\{a,b\})$ is an induced chair,
a contradiction. Therefore, $Y_j$ is a clique for every
$j\in\{0,\ldots,m\}\setminus\{i\}$.
\end{proof}

Since $x$ is complete to $Y$ and $Y_1,\ldots,Y_m$ are pairwise complete,
choosing one vertex from each $Y_i$ together with $x$ gives a clique of size
$m+1$. Hence $m\leq\omega-1$.

\begin{lemma}\label{lem1}
Let $i\in[m]$ and let $a,b\in Y_i$ be nonadjacent. Then the following holds.
\begin{enumerate}[(1)]
    \item $N_{R_i}(a)=N_{R_i}(b)$.
\item $N_{R_i}^3(a)=N_{R_i}^3(b)=\emptyset$. Moreover, for every
$j\in[m]\setminus\{i\}$, the set $Y_j$ is complete to $R_j$.
\end{enumerate}
\end{lemma}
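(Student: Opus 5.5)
Both parts come from exhibiting induced chairs. The tools are the fact that $x$ is complete to $Y$ and anticomplete to $H_1-Y$, the fact that $v_0$ is anticomplete to $H_1$, and Lemma~\ref{lem:Y-pairwise-complete}. Throughout, $N^k_{R_i}(a)$ denotes the vertices of $R_i$ at distance exactly $k$ from $a$ in $G[V(R_i)\cup\{a\}]$.

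For (1), suppose some $r\in V(R_i)$ is adjacent to $a$ but not to $b$. The vertex $x$ is adjacent to $a$, $b$ and $v_0$. It is nonadjacent to $r$, since $r\in V(H_1)\setminus Y$. The vertex $v_0$ is anticomplete to $H_1$, so $v_0$ is adjacent to none of $a$, $b$, $r$. Also $ab\notin E(G)$ and $br\notin E(G)$. Hence $(x,ar,\{b,v_0\})$ is an induced chair, a contradiction. By symmetry $N_{R_i}(a)=N_{R_i}(b)$.

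For the first assertion of (2), suppose $r_3\in N^3_{R_i}(a)$, and take a shortest path $a r_1 r_2 r_3$ with $r_1,r_2,r_3\in V(R_i)$. By (1), $b$ is also adjacent to $r_1$. Since $r_2$ and $r_3$ are at distance at least $2$ from $a$, neither is adjacent to $a$, and by (1) neither is adjacent to $b$. The path is shortest, so $r_1r_3\notin E(G)$. Together with $ab\notin E(G)$, this makes $(r_1,r_2r_3,\{a,b\})$ an induced chair, a contradiction. The argument is symmetric in $a$ and $b$.

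For the second assertion of (2), note that $Y_i$ is not a clique. Then Lemma~\ref{lem:Y-pairwise-complete}, read contrapositively, shows that $Y_j$ is complete to $R_j$ for every $j\in[m]\setminus\{i\}$. Indeed, if $Y_j$ were not complete to $R_j$, that lemma would force $Y_i$ to be a clique.

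The argument is explicit, if one wants it directly. Take $y\in Y_j$ mixed on $R_j$. Connectivity of $R_j$ gives adjacent $r_1,r_2\in V(R_j)$ with $yr_1\in E(G)$ and $yr_2\notin E(G)$. The vertex $y$ is complete to $\{a,b\}$ by pairwise completeness. Since $Y_i\cap Y_j=\emptyset$, $a$ and $b$ have no neighbours in $R_j$. Then $(y,r_1r_2,\{a,b\})$ is an induced chair.

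I expect no real obstacle. The only care needed is in checking all non-edges of each chair; the least routine of these is $r_1r_3$ and $r_2 b$ in (2), which are handled by the shortest path and by (1).
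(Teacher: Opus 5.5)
Your proposal is correct and matches the paper's proof essentially verbatim, using the same three induced chairs $(x,ar,\{b,v_0\})$, $(r_1,r_2r_3,\{a,b\})$ and $(y,r_1r_2,\{a,b\})$. Your shortcut for the last assertion is also valid: since $Y_i$ is not a clique, the final part of Lemma~\ref{lem:Y-pairwise-complete} applied contrapositively gives it. This only repackages the chair argument that the paper writes out directly, and which you also give.
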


\begin{proof}
(1) Suppose, without loss of generality, that some vertex $r\in V(R_i)$ is
adjacent to $a$ and nonadjacent to $b$. Then $(x,ar,\{b,v_0\})$ is an
induced chair, a contradiction.

It remains to show (2). By (1), $N_{R_i}(a)=N_{R_i}(b)$. Suppose that
$N_{R_i}^3(a)\neq\emptyset$. Then there is an induced path
$ar_1r_2r_3$, where $r_\ell\in N_{R_i}^{\ell}(a)$ for
$\ell\in\{1,2,3\}$. Since $N_{R_i}(a)=N_{R_i}(b)$, $b$ is
adjacent to $r_1$ and nonadjacent to $r_2$ and $r_3$. Hence
$(r_1,r_2r_3,\{a,b\})$ is an induced chair, a contradiction. Thus
$N_{R_i}^3(a)=\emptyset$, and by symmetry,
$N_{R_i}^3(b)=\emptyset$.

Now let $j\in[m]\setminus\{i\}$ and $y\in Y_j$. Suppose that $y$ is not
complete to $R_j$. Since $y$ has a neighbor in the connected graph $R_j$,
there are adjacent vertices $r_1,r_2\in V(R_j)$ such that
$yr_1\in E(G)$ and $yr_2\notin E(G)$. By
Lemma~\ref{lem:Y-pairwise-complete}, $y$ is adjacent to both
$a$ and $b$. Since $a,b\notin Y_j$, the definition of $Y_j$ implies that
$a$ and $b$ are anticomplete to $R_j$. Therefore,
$(y,r_1r_2,\{a,b\})$ is an induced chair, a contradiction. Hence $Y_j$
is complete to $R_j$.
\end{proof}

\begin{lemma}\label{lem:degeneracy}
For each $i\in[m]$, $R_i$ is $(2\omega-3)$-degenerate. Moreover,
if $Y$ is a clique, then $H_1$ is $(2\omega-3)$-degenerate.
\end{lemma}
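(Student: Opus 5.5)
The plan is to prove both assertions at once with a distance-layering argument. Let $T=G[V(H_1)\cup\{x\}]$ and, for $u\in V(H_1)$, let $d(u)$ be the distance from $x$ to $u$ in $T$. Then $d(u)=1$ exactly when $u\in Y$, and $d(u)\geq2$ for every $u\in V(R_i)$. To show that a graph is $(2\omega-3)$-degenerate it suffices to show that every nonempty induced subgraph $S$ has a vertex of degree at most $2\omega-3$ in $S$. So fix such an $S$ (inside $R_i$ for the first claim, inside $H_1$ for the second), and choose $r\in V(S)$ with $d:=d(r)$ maximum.

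Suppose first that $d\geq2$. Take a shortest path $x=g_0,g_1,\dots,g_{d-1},r$ in $T$. Write $p=g_{d-1}$, $g=g_{d-2}$, and let $h=g_{d-3}$ if $d\geq3$ and $h=v_0$ if $d=2$. The vertex $p$ need not lie in $S$. Every neighbor of $r$ in $S$ has $d$-value at least $d-1$, since it is adjacent to $r$, and at most $d$, by the maximality of $d$. Hence all of $N_S(r)\cup\{r\}$ is anticomplete to $g$ and to $h$. Here we use that $v_0$ is anticomplete to $H_1$, and that $x$ is anticomplete to $H_1\setminus Y$ when $d=2$.

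Now partition $N_S(r)=A\sqcup B$, where $A$ consists of the neighbors nonadjacent to $p$ and $B$ of those adjacent to $p$.
\begin{enumerate}[(i)]
\item If $a,a'\in A$ were nonadjacent, then $(r,pg,\{a,a'\})$ would be an induced chair: $g$ is nonadjacent to $r,a,a'$, and $p$ is nonadjacent to $a,a'$. Hence $A\cup\{r\}$ is a clique, and $|A|\leq\omega-1$.
\item If $b,b'\in B$ were nonadjacent, then $(p,gh,\{b,b'\})$ would be an induced chair: $h$ is nonadjacent to $p$ because the path is shortest, or because $v_0\notin N(p)$ when $d=2$, and $g,h$ are nonadjacent to $b,b'$. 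Hence $B\cup\{p,r\}$ is a clique, and $|B|\leq\omega-2$.
\end{enumerate}
Thus $d_S(r)\leq(\omega-1)+(\omega-2)=2\omega-3$.

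This already settles the first claim, because every vertex of $R_i$ has $d\geq2$. For the second claim, the only remaining case is $d=1$, that is, $V(S)\subseteq Y$. If $Y$ is a clique, then $Y\cup\{x\}$ is a clique, so $|V(S)|\leq\omega-1$ and every vertex of $S$ has degree at most $\omega-2\leq2\omega-3$.

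The main obstacle is making the chair in (ii) genuinely induced, which requires a vertex two steps behind $p$ that is nonadjacent to $p$ and to all of $B$. Choosing $r$ to maximize the global distance $d$ (rather than a distance inside $S$), and using $v_0$ as the fallback vertex when $d=2$, is the step to verify carefully. In particular, one must check that $h$ is nonadjacent to every vertex at level at least $d-1$; this holds by the BFS layering in $T$ for $d\geq3$, and because $v_0$ has no neighbors in $H_1$ when $d=2$.
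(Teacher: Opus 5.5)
There is a genuine gap at the step ``Hence all of $N_S(r)\cup\{r\}$ is anticomplete to $g$.'' You chose $r$ to \emph{maximize} $d$, so the neighbors of $r$ in $S$ lie in levels $d-1$ and $d$. But $g$, at level $d-2$, can be adjacent to neighbors at level $d-1$. Indeed $p$ itself is such a vertex, and $p$ may lie in $S$, in which case your partition $A\sqcup B$ is not even well defined. The chairs in (i) and (ii) then need not be induced, and the bound really fails for the vertex you pick.

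For instance, take vertices $v_0,x,x',y,c_1,c_2,r,s_1,s_2$ with edges $v_0x,v_0x',xx',xy,yc_1,yc_2,c_1c_2,c_1r,c_2r,c_1s_1,c_2s_2,rs_1,rs_2$. No vertex has three pairwise nonadjacent neighbors, so the graph is claw-free and hence chair-free. Here $\omega=3$, $v_0$ lies in the triangle $v_0xx'$, $Y=\{y\}$, and $R_1=\{c_1,c_2,r,s_1,s_2\}$. With $S=R_1$, the vertex $r$ attains the maximum level $3$, yet $d_S(r)=4=2\omega-2$. Taking $p=c_1$, $g=y$, $h=x$, the would-be chair $(c_1,yx,\{c_2,s_1\})$ in (ii) is not induced because $yc_2\in E(G)$. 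Deleting $y$ and joining $c_1,c_2$ directly to $x$ gives the same failure for the second assertion, now with $Y=\{c_1,c_2\}$ a clique.

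The fix is to choose $r$ with $d(r)$ \emph{minimum} over $V(S)$. This is what the paper does by taking a shortest path $p_0p_1\cdots p_\ell$ from $x$ to the whole of $F$. Then $N_S(r)$ lies in levels $d$ and $d+1$, so it is anticomplete to $g$ and $h$, and $p\notin V(S)$. Your two chairs are then induced exactly as written. They coincide with the paper's chairs $(p_\ell,p_{\ell-1}p_{\ell-2},\{a,b\})$ and $(p_{\ell-1},p_{\ell-2}p_{\ell-3},\{a,b\})$, with $v_0$ in place of $p_{\ell-3}$ when $\ell=2$.

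With this choice, the leftover case in the second assertion is $r\in Y$ rather than $V(S)\subseteq Y$, and it needs one more observation. The neighbors of $r$ outside $Y$ form a clique, since otherwise $(r,xv_0,\{s,s'\})$ is a chair. Hence $d_S(r)\le(|Y|-1)+(\omega-1)\le 2\omega-3$, as in the paper.
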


\begin{proof}
Suppose that some induced subgraph $F$ of $R_i$ satisfies
$\delta(F)\geq2\omega-2$. Let $p_0p_1\cdots p_\ell$ be a shortest path
from $x$ to $F$ in $G[\{x\}\cup Y_i\cup V(R_i)]$, where $p_0=x$ and
$p_\ell\in V(F)$. Then $p_1\in Y_i$, $\ell\geq2$, and no vertex of $F$
is adjacent to $p_j$ for $j\leq\ell-2$.

If $|N_F(p_{\ell-1})\cap N_F(p_\ell)|\geq\omega-1$, then this intersection
contains nonadjacent vertices $a,b$; otherwise, together with
$p_{\ell-1}$ and $p_\ell$, it would form a clique of size at least
$\omega+1$. If $\ell=2$, then $(p_1,xv_0,\{a,b\})$ is an induced chair.
If $\ell\geq3$, then
$(p_{\ell-1},p_{\ell-2}p_{\ell-3},\{a,b\})$ is an induced chair.

We may therefore assume that
$|N_F(p_{\ell-1})\cap N_F(p_\ell)|\leq\omega-2$. Since
$d_F(p_\ell)\geq2\omega-2$,
$N_F(p_\ell)\setminus N_F(p_{\ell-1})$ has at least $\omega$ vertices and
contains nonadjacent vertices $a,b$; otherwise, together with $p_\ell$ it
would form a clique of size at least $\omega+1$. Then
$(p_\ell,p_{\ell-1}p_{\ell-2},\{a,b\})$ is an induced chair. Hence every
$R_i$ is $(2\omega-3)$-degenerate.

Now suppose that $Y$ is a clique. If $m=0$, then $H_1=G[Y]$, and the result
is immediate. Assume that $m\geq1$. For every $y\in Y_i$,
$N_{R_i}(y)$ is a clique; otherwise, nonadjacent vertices $r,s$ in this set
give the induced chair $(y,xv_0,\{r,s\})$. Thus
$|N_{R_i}(y)|\leq\omega-1$. Also, $|Y|\leq\omega-1$ because $x$ is
complete to $Y$.

Let $F'$ be a nonempty induced subgraph of $H_1$. If
$V(F')\cap Y\neq\emptyset$, choose $y\in V(F')\cap Y$. If $y\in Y_0$,
then $d_{F'}(y)\leq|Y|-1\leq\omega-2$. If $y\in Y_i$ for some $i\in[m]$,
then
$d_{F'}(y)
\leq
|Y|-1+|N_{R_i}(y)|
\leq
2\omega-3$.
If $V(F')\cap Y=\emptyset$, then $F'$ is contained in the pairwise
anticomplete union of $R_1,\ldots,R_m$, and the first part gives a vertex
of degree at most $2\omega-3$ in $F'$. Therefore, $H_1$ is
$(2\omega-3)$-degenerate.
\end{proof}

\section{Proof of Theorem~\ref{main thm}}

We restate the theorem for convenience.

\twolinearlydivisible*

\begin{proof}[Proof of Theorem~\ref{main thm}]
By definition, it suffices to prove that every connected chair-free graph
admits a $(2,2)$-linear division. We proceed by induction on $|V(G)|$.
Let $G$ be a connected chair-free graph, let $\omega=\omega(G)$, and
assume that every connected chair-free graph with fewer vertices admits a
$(2,2)$-linear division. Thus, every proper induced subgraph of $G$ is
$(2,2)$-linearly divisible.

If $\omega\leq2$, take $G_1$ to be empty, $G_2=G$, $r=0$, and
$\alpha=\omega$. The case $\omega=1$ is immediate, while every
triangle-free chair-free graph is
$3$-colorable~\cite{Randerath-Schiermeyer2002}. Thus,
$\chi(G)\leq\omega(\omega+1)$, and these choices give a $(2,2)$-linear
division. Hence, we may assume that $\omega\geq3$.

Apply the global setup from Section~\ref{section:structure}. Let $v_0$ be a
vertex contained in a maximum clique of $G$. If $G-N[v_0]=\emptyset$, take
$G_1=D_1=G[N(v_0)]$, $G_2=G[\{v_0\}]$, $r=1$, and $\alpha=1$. Then
$\omega(D_1)=\omega-1\geq2$ and $\chi(G_2)=1\leq2\omega$, so these
graphs give a $(2,2)$-linear division.

Assume that $G-N[v_0]\neq\emptyset$, and let $H_1,\ldots,H_t$ be its
components, where $H_1$ has maximum chromatic number. If $H_1$ is a clique,
take $G_1=D_1=G[N(v_0)]$ and
$G_2=G\left[\{v_0\}\cup\bigcup_{i=1}^tV(H_i)\right]$.
Since the graphs $H_i$ are pairwise anticomplete and $v_0$ is anticomplete
to each of them,
$\chi(G_2)=\max_{i\in[t]}\chi(H_i)=\chi(H_1)\leq\omega$. Thus, $r=1$
and $\alpha=1$ give a $(2,2)$-linear division. Hence $H_1$ is not a clique, and we
use the notation $t'$ from the global setup.

Choose $x,Y,R_1,\ldots,R_m,Y_0,Y_1,\ldots,Y_m$ as in
Section~\ref{section:structure}.

\medskip
\textbf{We first assume that $m=0$.}
Let
\[
U_1=\{u\in N(v_0):u\text{ is complete to }H_1\},
\qquad
U_2=\{u\in N(v_0):u\text{ is anticomplete to }H_1\}.
\]
By the choice of $x$, no vertex of $N(v_0)$ is mixed on $H_1$. Hence
$N(v_0)=U_1\sqcup U_2$, and $x\in U_1$.

$U_1$ is complete to $U_2$. Otherwise, choose nonadjacent vertices
$a,b\in V(H_1)$ and nonadjacent vertices $u\in U_1$, $z\in U_2$. Then
$(u,v_0z,\{a,b\})$ is an induced chair, a contradiction.

Every vertex of $U_1$ is either complete or anticomplete to each $H_i$.
Indeed, if $u\in U_1$ is mixed on $H_i$ for some $i\geq2$, connectedness of
$H_i$ gives adjacent vertices $p,q\in V(H_i)$ such that $up\in E(G)$ and
$uq\notin E(G)$. Choosing nonadjacent vertices $a,b\in V(H_1)$ gives the
induced chair $(u,pq,\{a,b\})$, a contradiction.

For $u\in U_1$, let
$I(u)=\{i\in[t']:u\text{ is complete to }H_i\}$. We show that the
sets $I(u)$, $u\in U_1$, form a chain. First, suppose that
$I(u)\neq I(w)$. After interchanging $u$ and $w$, choose $i\in I(u)\setminus I(w)$. Since $1\in I(u)\cap I(w)$, we have $i\neq1$. Choose nonadjacent vertices $a,b\in V(H_i)$ and a vertex $c\in V(H_1)$. If $uw\notin E(G)$, then $(u,cw,\{a,b\})$ is an induced chair, a contradiction. Thus, $uw\in E(G)$.

If $I(u)$ and $I(w)$ are incomparable, choose
$i\in I(u)\setminus I(w)$ and $j\in I(w)\setminus I(u)$. For nonadjacent
vertices $a,b\in V(H_i)$ and any $h\in V(H_j)$, the vertices
$(u,wh,\{a,b\})$ induce a chair, a contradiction. Hence, the sets
$I(u)$, $u\in U_1$, form a chain.

Moreover, $\bigcup_{u\in U_1}I(u)=[t']$. Let
$i\in[t']\setminus\{1\}$. Since $G$ is connected, some vertex
$z\in N(v_0)$ has a neighbor $h\in V(H_i)$. If no vertex of $U_1$ is
complete to $H_i$, then $U_1$ is anticomplete to $H_i$, so $z\in U_2$.
Choose $u\in U_1$ and nonadjacent vertices $a,b\in V(H_1)$. Since
$uz\in E(G)$, the vertices $(u,zh,\{a,b\})$ induce a chair, a
contradiction.

Since $\{I(u):u\in U_1\}$ is a finite chain, it has largest member $[t']$.
Let $U^*=\{u\in U_1:I(u)=[t']\}$, and let
\[
D_1=G[U^*], \qquad D_2=G\left[(U_1\setminus U^*)\cup U_2
       \cup\bigcup_{i=1}^{t'}V(H_i)\right], \qquad
G_2=G\left[\{v_0\}\cup\bigcup_{i=t'+1}^tV(H_i)\right].
\]
Then $V(G)=V(D_1)\sqcup V(D_2)\sqcup V(G_2)$. Both $D_1$ and $D_2$
are nonempty and complete to one another. Indeed,
$U^*$ is complete to $U_2$ and to every $H_i$ with $i\leq t'$, while a
vertex of $U^*$ is adjacent to every vertex of $U_1\setminus U^*$ because
their sets $I(u)$ are distinct. The graph $G_2$ is a disjoint union of
cliques and the isolated vertex $v_0$, so $\chi(G_2)\leq\omega\leq
2\omega-1$. Corollary~\ref{cor:two-blocks} shows that $G$ is
$(2,2)$-linearly divisible.

\medskip

\textbf{Now we may assume that $m\geq1$.} We may also assume that $Y$ is not a
clique. Otherwise, Lemma~\ref{lem:degeneracy} gives
$\chi(H_1)\leq2\omega-2$. Take $G_1=D_1=G[N(v_0)]$ and
$G_2=G\left[\{v_0\}\cup\bigcup_{i=1}^tV(H_i)\right]$.
Then $\chi(G_2)=\chi(H_1)\leq2\omega-2$. Thus, $r=1$ and $\alpha=1$
give a $(2,2)$-linear division.

\medskip
\noindent\textbf{Case when $t'\geq2$.}
By Lemma~\ref{t>1}, there is a vertex $w\in N(v_0)$ complete to
$\bigcup_{i=1}^{t'}V(H_i)$. Choose nonadjacent vertices $a,b\in Y$. The
vertices $w,a,b$ induce a $2$-template. By
Lemma~\ref{lem:template-extension}, this template extends to a maximal
template $Q$ containing $w$ and some vertex $z\in\{a,b\}$.

For $i\leq t'$, every vertex of $H_i$ is adjacent to $w$, and hence lies in
$V(Q)\cup N(Q)$. For each $j\in\{t'+1,\ldots,t\}$, the preliminary
argument in the proof of Lemma~\ref{t>1} gives a vertex
$w_j\in N(v_0)$ complete to $H_1\cup H_j$. Since
$z\in V(H_1)\cap V(Q)$, $w_j$ lies in $V(Q)\cup N(Q)$, and
every vertex of $H_j$ has distance at most two from $Q$. Also,
$v_0\in N(Q)$ because $w\in V(Q)$, so every vertex of $N(v_0)$ has
distance at most two from $Q$. Therefore, $N^{\geq3}(Q)=\emptyset$.

Let $U=N^2(Q)\cap N(v_0)$. Then $N^2(Q)
\subseteq
U\cup\bigcup_{j=t'+1}^tV(H_j)$.
The set $U$ is a clique. Indeed, if $u_1,u_2\in U$ are nonadjacent, then
$(v_0,wz,\{u_1,u_2\})$ is an induced chair. Moreover, $U$ is anticomplete to every $H_j$ with
$j>t'$. Indeed, if $u\in U$ has a neighbor in $H_j$, the preliminary
argument in the proof of Lemma~\ref{t>1} shows that
$u$ is complete to $H_1$, contradicting $uz\notin E(G)$. Finally,
$|U|\leq\omega-1$, and $|V(H_j)|\leq\omega-1$ because $w_j$ is complete
to the clique $H_j$. Thus, $G[N^2(Q)]$ is a disjoint union of cliques of
order at most $\omega-1$, and
$\chi(G[N^2(Q)])\leq\omega-1$. Lemma~\ref{lem:template-divisible} now
shows that $G$ is $(2,2)$-linearly divisible.

\medskip

Hence we may assume that $t'=1$. For later use, note that for every
$j\in\{2,\ldots,t\}$, the same preliminary argument gives a vertex
$w_j\in N(v_0)$ complete to $H_1\cup H_j$. In particular,
$|V(H_j)|\leq\omega-1$. Moreover, if a template $Q$ contains $x$ and a
vertex $z\in Y$, then every vertex of $N(v_0)$ and every vertex of
$H_2\cup\cdots\cup H_t$ has distance at most two from $Q$.
$N^2(Q)\cap N(v_0)$ is a clique and is anticomplete to
$H_2\cup\cdots\cup H_t$. Indeed, nonadjacent vertices
$u_1,u_2\in N^2(Q)\cap N(v_0)$ give the induced chair
$(v_0,xz,\{u_1,u_2\})$, while a vertex of $N^2(Q)\cap N(v_0)$ with a
neighbor in some $H_j$ is complete to $H_1$, contradicting its
nonadjacency to $z$.

We distinguish three cases.

\medskip
\noindent\textbf{Case 1.}
There are at least two indices $i\in[m]$ for which $R_i$ is not complete to
$Y_i$. Relabel so that this holds for $i=1,2$. Applying
Lemma~\ref{lem:Y-pairwise-complete} first with $i=1$ and then with $i=2$
shows that every set $Y_0,Y_1,\ldots,Y_m$ is a clique. Since these sets are
pairwise complete, $Y$ is a clique, a contradiction.

\medskip
\noindent\textbf{Case 2.}
For every $i\in[m]$, $R_i$ is complete to $Y_i$. Each $R_i$ is a
clique. Otherwise, for nonadjacent vertices $r_1,r_2\in V(R_i)$ and any $y_i\in Y_i$, the vertices $(y_i,xv_0,\{r_1,r_2\})$ induce a chair, a
contradiction.

Since $Y$ is not a clique and $Y_0,Y_1,\ldots,Y_m$ are pairwise complete,
some $Y_s$, where $s\in\{0,\ldots,m\}$, is not a clique. Choose
nonadjacent vertices $a,b\in Y_s$, and for each
$i\in[m]\setminus\{s\}$ choose a vertex $y_i\in Y_i$. The singleton parts
$\{x\}$ and $\{y_i\}$, together with the last part $\{a,b\}$, form a
template. By Lemma~\ref{lem:template-extension}, it extends to a maximal
template $Q$ containing $x$, every $y_i$, and some $z\in\{a,b\}$.

Every vertex of $Y$ is adjacent to $x$. Each $R_i$ has a neighbor in $Q$:
use $z$ when $i=s\in[m]$, and use $y_i$ otherwise. Hence
$V(H_1)\subseteq V(Q)\cup N(Q)$. The preceding observation shows that
every other vertex of $G$ has distance at most two from $Q$, so
$N^{\geq3}(Q)=\emptyset$.

We have $N^2(Q)
\subseteq
\bigl(N^2(Q)\cap N(v_0)\bigr)
\cup
\bigcup_{j=2}^tV(H_j)$.
$N^2(Q)\cap N(v_0)$ is a clique anticomplete to every $H_j$, and
each of these cliques has order at most $\omega-1$. Therefore,
$\chi(G[N^2(Q)])\leq\omega-1$, and Lemma~\ref{lem:template-divisible}
shows that $G$ is $(2,2)$-linearly divisible.

\medskip
\noindent\textbf{Case 3.}
There is exactly one index $i\in[m]$ for which $R_i$ is not complete to
$Y_i$. Relabel so that this index is $1$. By
Lemma~\ref{lem:Y-pairwise-complete}, the sets $Y_0,Y_2,\ldots,Y_m$ are
cliques. Since $Y$ is not a clique, $Y_1$ is not a clique. For every
$i\in\{2,\ldots,m\}$, the graph $R_i$ is also a clique; otherwise, for
nonadjacent vertices $r_1,r_2\in V(R_i)$ and any $y_i\in Y_i$, the
vertices $(y_i,xv_0,\{r_1,r_2\})$ induce a chair, a contradiction.

Choose nonadjacent vertices $a,b\in Y_1$. By Lemma~\ref{lem1}, the
sets $N_{R_1}(a)$ and $N_{R_1}(b)$ are equal and nonempty. Moreover,
$N_{R_1}^3(a)=N_{R_1}^3(b)=\emptyset$. For each
$i\in\{2,\ldots,m\}$, choose a vertex $y_i\in Y_i$. The singleton parts
$\{x\},\{y_2\},\ldots,\{y_m\}$, together with the last part $\{a,b\}$,
form a template. Extend it to a maximal template $Q$ by
Lemma~\ref{lem:template-extension}, and choose
$z\in\{a,b\}\cap V(Q)$. Thus, $x,z,y_2,\ldots,y_m\in V(Q)$.

Every vertex of $Y$ is adjacent to $x$, and every vertex of $R_i$ with
$i\geq2$ is adjacent to $y_i$. Moreover, $N_{R_1}^3(z)=\emptyset$, so
every vertex of $R_1$ has distance at most two from $Q$. The preceding
observation handles $N(v_0)$ and $H_2,\ldots,H_t$. Therefore,
$N^{\geq3}(Q)=\emptyset$.

Let $A=N_{R_1}(z)$ and $B=V(R_1)\setminus A$.

\begin{claim}\label{Bclique}
Every component of $G[B]$ is a clique.
\end{claim}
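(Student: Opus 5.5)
The plan is to argue by contradiction using a single induced path. Suppose some component of $G[B]$ is not a clique. Since that component is connected and not complete, it contains an induced path $b_1b_2b_3$, so $b_1b_2,b_2b_3\in E(G)$ and $b_1b_3\notin E(G)$. We then exhibit an induced chair in each of the possible cases.

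First I record the available facts. Let $a,b\in Y_1$ be the chosen nonadjacent pair, with $z\in\{a,b\}$. By Lemma~\ref{lem1}(1), $N_{R_1}(a)=N_{R_1}(b)=A$. Hence both $a$ and $b$ are anticomplete to $B$.

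Next I locate a neighbor of $b_2$ in $A$. We have $N_{R_1}^3(z)=\emptyset$ by Lemma~\ref{lem1}(2), and $R_1$ is connected with $A\neq\emptyset$. So every vertex of $B$ lies at distance exactly two from $z$ in $G[\{z\}\cup V(R_1)]$. In particular $b_2$ has a neighbor $p\in A$, and $p$ is adjacent to both $a$ and $b$.

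I also use the vertex $x$. It is adjacent to $a$ and $b$, since $Y_1\subseteq Y=N(x)\cap V(H_1)$. It is anticomplete to $R_1$, since $R_1$ is a component of $H_1-Y$.

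Now I split according to the adjacency of $p$ to $b_1$ and $b_3$.

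\emph{Case (i): $p$ is nonadjacent to $b_3$.} Then $(p,b_2b_3,\{a,b\})$ is an induced chair. Indeed, $p$ is adjacent to $a$, $b$ and $b_2$; the vertices $a,b$ are nonadjacent; $a,b$ are anticomplete to $\{b_2,b_3\}\subseteq B$; and $b_3$ is adjacent to $b_2$ but not to $p$.

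\emph{Case (ii): $p$ is nonadjacent to $b_1$.} This is symmetric to Case (i), and $(p,b_2b_1,\{a,b\})$ is an induced chair.

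\emph{Case (iii): $p$ is adjacent to both $b_1$ and $b_3$.} Then $(p,ax,\{b_1,b_3\})$ is an induced chair. Here $p$ is adjacent to $a$, $b_1$ and $b_3$; the vertices $b_1,b_3$ are nonadjacent; $a$ is anticomplete to $\{b_1,b_3\}$; and $x$ is adjacent to $a$ but anticomplete to $\{p,b_1,b_3\}\subseteq V(R_1)$.

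Every case contradicts chair-freeness, so each component of $G[B]$ is a clique.

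The step that needs the most care is the verification of non-edges in these chairs. In particular one must use $N_{R_1}(a)=N_{R_1}(b)$ to ensure that both $a$ and $b$, and not merely $z$, miss all of $B$. One must also confirm that $x$ has no neighbors in $R_1$, which follows from the definition of $Y$ and of the components $R_i$.
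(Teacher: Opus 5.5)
Your proof is correct and uses the same two chairs as the paper. The first is $(p,b_2b_3,\{a,b\})$, which rules out a vertex of $A$ being mixed on an edge of $B$. The second is $(p,ax,\{b_1,b_3\})$, which handles a vertex of $A$ adjacent to two nonadjacent vertices of $B$; the paper uses $zx$ in place of your $ax$.

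The only difference is organizational. The paper shows that each $u\in A$ is complete or anticomplete to a component of $G[B]$, and then uses connectivity of $R_1$ to find one such $u$ that is complete to it. You instead obtain a neighbor $p\in A$ of the middle vertex $b_2$ directly from $N^3_{R_1}(z)=\emptyset$, and then split into three cases.
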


\begin{proof}
Let $B_i$ be a component of $G[B]$. Since $R_1$ is connected, some vertex
of $A$ has a neighbor in $B_i$. Every vertex $u\in A$ is either complete or
anticomplete to $B_i$. Otherwise, connectedness of $B_i$ gives adjacent
vertices $b_1,b_2\in B_i$ such that $ub_1\in E(G)$ and
$ub_2\notin E(G)$. By Lemma~\ref{lem1}(1), $u$ is complete to
$\{a,b\}$, and $(u,b_1b_2,\{a,b\})$ is an induced chair, a contradiction.
Choose $u\in A$ with a neighbor in $B_i$. Then $u$ is complete to $B_i$.
If $B_i$ contains nonadjacent vertices $b_1,b_2$, then
$(u,zx,\{b_1,b_2\})$ is an induced chair, a contradiction. Hence $B_i$ is a
clique.
\end{proof}

Let $U_0=N^2(Q)\cap N(v_0)$. Since $Y\cup A$ and
$R_2\cup\cdots\cup R_m$ lie in $V(Q)\cup N(Q)$,
$N^2(Q)\setminus U_0
\subseteq
B\cup\bigcup_{j=2}^tV(H_j)$.
Thus, every component of $G[N^2(Q)\setminus U_0]$ is a clique, by
Claim~\ref{Bclique} and the fact that $H_2,\ldots,H_t$ are pairwise
anticomplete cliques. Let $F_1,\ldots,F_s$ be these components. By the
preceding observation, $U_0$ is a clique, while $F_1,\ldots,F_s$ are
pairwise anticomplete cliques.

Fix a proper coloring of $U_0$ from a palette of $\omega$ colors. For each $i\in[s]$, since $U_0$ and $V(F_i)$ are cliques, the complement
of $G[U_0\cup V(F_i)]$ is bipartite. Hence
$G[U_0\cup V(F_i)]$ is perfect by the Weak Perfect Graph Theorem.
It therefore admits a coloring with at most $\omega$ colors. Since $U_0$ is
a clique, after permuting the colors we may assume that this coloring agrees
with the fixed coloring of $U_0$. As $F_1,\ldots,F_s$ are pairwise
anticomplete, these colorings combine to give
$\chi(G[N^2(Q)])\leq\omega$. Lemma~\ref{lem:template-divisible} now shows
that $G$ is $(2,2)$-linearly divisible.
\end{proof}

\section*{Acknowledgments}
We thank Songling Shan for an insightful question that led us to modify the
definition of $(k,\ell)$-linear divisibility to take into account the number of
parts in the decomposition. The Paley graph $P(17)$ was identified during
exploratory use of OpenAI's ChatGPT and was subsequently verified by the
authors. OpenAI's ChatGPT was also used for language polishing and for
generating the Python code used in the computational verification. All
mathematical arguments and computational results were checked by the authors.

\end{document}